\newsavebox{\@brx}
\newcommand{\llangle}[1][]{\savebox{\@brx}{\(\m@th{#1\langle}\)}%
  \mathopen{\copy\@brx\kern-0.5\wd\@brx\usebox{\@brx}}}
\newcommand{\rrangle}[1][]{\savebox{\@brx}{\(\m@th{#1\rangle}\)}%
  \mathclose{\copy\@brx\kern-0.5\wd\@brx\usebox{\@brx}}}
\theoremstyle{plain} 
\newtheorem{theorem}{\indent\sc Theorem}[section]
\newtheorem{lemma}[theorem]{\indent\sc Lemma}
\newtheorem{corollary}[theorem]{\indent\sc Corollary}
\newtheorem{proposition}[theorem]{\indent\sc Proposition}
\theoremstyle{definition} 
\newtheorem{definition}[theorem]{\indent\sc Definition}
\newcommand{\delbar}{\overline{\partial}}
\newcommand{\vp}{\varphi}
\newcommand{\ve}{\varepsilon}
\newcommand{\U}{\mathcal{U}}
\newcommand{\R}{\mathbb{R}}
\newcommand{\C}{\mathbb{C}}
\newcommand{\D}{\mathbb{D}}
\newcommand{\Z}{\mathbb{Z}}
\newcommand{\twopii}{2\pi\sqrt{-1}}
\newcommand{\one}{\textup{\mbox{1}\hspace{-0.25em}\mbox{l}}}
\newcommand{\NCM}{N_{C/M}}
\newcommand{\KHo}{K_{\textup{H\"{o}}}}
\newcommand{\CHo}{C_{\textup{H\"{o}}}}
\newcommand{\Pic}{\mathrm{Pic}}
\newcommand{\dEuc}{d_{\mathrm{Euc}}}
\newcommand{\dUeda}{d_{\mathrm{Ueda}}}
\newcommand{\genus}{\mathrm{genus}}
\newcommand{\hflat}{h_{\mathrm{flat}}}
\newcommand{\Picntzero}{\mathrm{Pic}^0_\mathrm{nt}}
\begin{document}

\title[On holomorphic tubular neighborhoods of compact Riemann surfaces]
{On holomorphic tubular neighborhoods of compact Riemann surfaces} 

\author[S. Ogawa]{Satoshi Ogawa} 

%
\keywords{Holomorphic tubular neighborhoods. Brjuno condition. 
}
\address{
Department of Mathematics, Graduate School of Science, Osaka Metropolitan University \endgraf
3-3-138, Sugimoto, Sumiyoshi-ku Osaka, 558-8585 \endgraf
Japan}
\email{sn22894n@st.omu.ac.jp}
\maketitle

\begin{abstract}
Let $C$ be a compact Riemann surface holomorphically embedded in a non-singular complex surface $M$ with the unitary flat line bundle $\NCM$. 
We give a sufficient condition for the existence of a holomorphic tubular neighborhood of $C$ in $M$.  
Our sufficient condition is described by an arithmetical condition of $\NCM$ in $\Pic^0(C)$ which can be regarded as an analogue of the Brjuno condition for irrational numbers which appears in the theory of 1-variable complex dynamics. 
\end{abstract}

\section{Introduction}\label{intro}

Let $C$ be a complex submanifold of a complex manifold $M$. 
Our interest in this paper is on  a {\em holomorphic tubular neighborhood} of $C$ in $M$: i.e.   
a neighborhood $T$ of $C$ in $M$ such that there exists a biholomorphic map $\vp$ from $T$ to a neighborhood of the zero section $C'$ of $\NCM$ whose restriction $\vp|_C\colon C \to C'$ is a biholomorphism, where $\NCM$ is the normal bundle over $C$. 
Our motivation comes from \cite{T} and \cite{KU}. 
In \cite{T}, Tsuji constructed holomorphic tubular neighborhoods of  Hopf surfaces embedded in 3-folds under a condition on normal bundles and showed the existence of a new complex structure on $S^3 \times S^3$ by using them. 
In \cite{KU}, Koike and Uehara constructed a K3 surface by using holomorphic tubular neighborhoods of elliptic curves embedded in rational surfaces  (see also \cite{L}). 
In what follows, we always suppose that $\dim C = 1$, $\dim M = 2$, and $C$ is connected. 

Historically, the first non-trivial sufficient condition for the existence of a holomorphic tubular neighborhood was obtained by Grauert \cite{G}. 
He showed that, when $\NCM$ is negative, the {\em formal principle} holds for a neighborhood of $C$ in $M$: i.e. a holomorphic tubular neighborhood of $C$ in $M$ exists if there exists a formal mapping which gives it. 
See \cite[Definition1.1, 1.2]{H} for the details of the formal principle. 

In contrast to this, it is known that the formal principle does not hold when $\NCM$ is unitary flat from \cite{A} and \cite{U}. 
In \cite{A}, Arnol'd posed the Diophantine condition for unitary flat line bundles from the view point of Siegel's linearization theorem in the theory of 1-variable complex dynamics (see \cite{S}, and also \cite[\S 2.6]{CG},  
about a generalization of Arnol'd's result, see \cite{GS} and \S\ref{GSresult} in this paper). 
He showed that, when $C$ is an elliptic curve with the unitary flat normal bundle $\NCM$, there exists a holomorphic tubular neighborhood of $C$ in $M$ if $\NCM$ satisfies the Diophantine condition. 
In \cite[\S 5.4]{U}, Ueda gave a concrete example of $C$ and $M$ with the unitary flat normal bundle in which the formal principle breaks down. 

Motivated from the improved linearization theorem on $1$-variable complex dynamics by Brjuno \cite[Theorem 6 in Chapter II]{B} (see also \cite[Th\'{e}or\`{e}me in page 6]{Y}), we will pose the {\em Brjuno condition} for unitary flat line bundles. 
Denote by $\Pic^0(C)$ the set of equivalent classes of unitary flat line bundles over $C$. 
We remark that $\Pic^0(C)$ has the holomorphically trivial line bundle $\one$.  
Let $\Picntzero(C)$ be the subset of $\Pic^0(C)$ defined by $\Picntzero(C) = \{E\in\Pic^0(C) \mid E^\nu \neq \one\ \textup{for any} \  \nu \in \Z\backslash\{0\}\}$. 
In our settings, since $\Pic^0(C)$ is biholomorphic to a complex torus, it admits the Euclidian distance $d$. 
\begin{definition}\label{Brjuno}
For $E \in \Picntzero(C)$, we say that $E$ satisfies the {\em Brjuno condition} when $E$ satisfies  
\[
\sum_{k\geq1} \frac{\log \omega_{k+1}}{2^k} < \infty, 
\]
where $\omega_{k+1}$ is defined by 
\[
\omega_{k+1} = \omega_{k+1}(E) = \max_{2 \leq \ell \leq 2^{k+1}} \frac{1}{d(\one, E^{-\ell + 1})}. 
\]
\end{definition}
In \S\ref{invariant_dist}, we will see the invariance of the Brjuno condition under the change of the choice of the Euclidian distances. 
Our main result is the following.
\begin{theorem}\label{main}
Let $C$ be a compact Riemann surface holomorphically embedded in a non-singular complex surface $M$ with the unitary flat normal bundle $\NCM$. 
Assume that $C$ has a holomorphic tubular neighborhood by a formal mapping which is tangent to the identity and preserves the splitting of $TM|_C$ in the sense of \cite{GS}. 
If $\NCM$ satisfies $\NCM\in\Picntzero(C)$ and the Brjuno condition, then $C$ actually has  a holomorphic tubular neighborhood in $M$. 
\end{theorem}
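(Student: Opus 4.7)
The plan is to run a Siegel--Brjuno-type majorant argument adapted to the tubular setting, using the formal mapping assumed to exist and proving its convergence by a Newton iteration with the doubling trick. Since $\NCM$ is unitary flat and the formal map preserves the splitting of $TM|_C$ in the sense of \cite{GS}, one can fix a finite open cover $\{U_j\}$ of $C$ together with local coordinates $(z_j, w_j)$ on a neighborhood of $C$ in $M$ in which $C = \{w_j = 0\}$ and the transition functions for $w_j$ reduce to multiplication by the $\Uone$-valued cocycle of $\NCM$ modulo higher-order terms in $w_j$. The formal tubular neighborhood map then produces, in these coordinates, formal power series $w_j = \sum_{k\ge 1} f_{j,k}(z_j)\, t_j^k$ with $f_{j,1}\equiv 1$, where $t_j$ is the fiber coordinate of $\NCM$; the task reduces to showing that, after suitable shrinking of the $U_j$, these formal series converge on a uniform polydisc neighborhood.

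The iteration proceeds by constructing a sequence of biholomorphic changes of coordinates $\Phi_k$ defined on nested tubular domains $T_k \supset T_{k+1}$, each of which kills the Taylor terms of order $\le 2^k$ in the fiber variable. At the $k$-th step one is led to a cohomological equation on $C$ of the shape
\[
(\one - \NCM^{-\ell+1})\, g_{\ell} = h_{\ell}, \qquad 2 \le \ell \le 2^{k+1},
\]
whose solution $g_\ell$ is controlled by $1/d(\one, \NCM^{-\ell+1})$, i.e.\ exactly by the quantity $\omega_{k+1}$ of Definition \ref{Brjuno}. The hypothesis $\NCM \in \Picntzero(C)$ guarantees that each such equation is genuinely solvable, and a standard Cauchy-type estimate on the fibers, combined with the compactness of $C$, yields a bound for $\Phi_k - \mathrm{id}$ on the $k$-th tube of the form $C_k \le A^{2^k}\, \omega_{k+1}^{B}$ for universal constants $A, B>0$.

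Then one tracks the loss of fiber radius $r_k \searrow r_\infty$ with $\log(r_k / r_{k+1})$ dominated by a multiple of $\log \omega_{k+1} / 2^k$, so that the total loss
\[
\sum_{k\ge 1} \log \frac{r_k}{r_{k+1}} \lesssim \sum_{k\ge 1} \frac{\log \omega_{k+1}}{2^k}
\]
is finite precisely by the Brjuno condition. Hence $r_\infty > 0$ and the composition $\Phi = \lim_{k\to\infty} \Phi_k \circ \cdots \circ \Phi_1$ converges uniformly to a holomorphic map on a genuine tubular domain, which by construction agrees with the given formal mapping and thus linearizes the embedding of $C$.

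The main obstacle I expect is the bookkeeping in the second step: unlike the scalar Siegel--Brjuno setting, the coefficients $f_{j,k}(z_j)$ are sections of line bundles on $C$, the conjugating diffeomorphisms are only defined on local pieces, and one must check that the cohomological equations glue into a globally defined object on $C$ and that the estimates behave uniformly under the $\Uone$-cocycle. Controlling the interaction between the doubling of the order with respect to $t_j$ and the operator norm of $(\one - \NCM^{-\ell+1})^{-1}$ on an appropriate Banach space of sections, so that the geometric-type bound $C_k \le A^{2^k}\omega_{k+1}^{B}$ really holds with the exponent $B$ independent of $k$, is where care is required; this is where the assumption that the formal map preserves the splitting of $TM|_C$ will be used decisively to eliminate unwanted coupling terms in the cohomological equations.
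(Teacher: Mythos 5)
Your proposal takes a genuinely different route from the paper, but it is not complete. The paper runs no iteration at all: it invokes Gong--Stolovitch's theorem (\cite[Theorem 1.5]{GS}), which already packages the whole Newton/majorant argument into the single summability condition $\sum_{k\ge1}2^{-k}\log D_\ast(2^{k+1})<\infty$, and the entire proof consists of estimating the constants $K(T_C\otimes\NCM^{-\ell+1})$, $K(\NCM^{-\ell+1})$ and $D(\NCM^{-\ell+1})$ entering $D_\ast(2^{k+1})$ in terms of $d(\one,\NCM^{-\ell+1})$, so that the Brjuno condition implies the condition \textbf{(GS)}. You instead propose to re-derive the KAM iteration directly. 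That would be legitimate, but as written your sketch asserts precisely the estimates that constitute the paper's actual content, and it omits one family of equations entirely.

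Concretely, there are two gaps. First, you claim the solution $g_\ell$ of the cohomological equation ``is controlled by $1/d(\one,\NCM^{-\ell+1})$'' with constants uniform in $\ell$, and that a ``standard Cauchy-type estimate'' then yields $C_k\le A^{2^k}\omega_{k+1}^{B}$. This uniformity is not standard: one needs a Ueda-lemma / uniform H\"ormander-type statement that the $\delta$-equation for cochains valued in the flat bundle $\NCM^{-\ell+1}$ admits solutions with operator norm at most $K_0/d(\one,\NCM^{-\ell+1})$, with $K_0$ independent of $\ell$, together with a Donin-type estimate of the same uniformity under shrinking of the covering; these are the paper's Corollaries \ref{CorKS} and \ref{CorD}, proved via Proposition \ref{HK} and a \v{C}ech--Dolbeault comparison. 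Second, your iteration tracks only the normal (fiber) component. In the Gong--Stolovitch setup the obstruction at each step also has a tangential part, giving cohomological equations valued in $T_C\otimes\NCM^{-\ell+1}$ --- this is why $D_\ast(2^{k+1})$ involves $K(T_C\otimes\NCM^{-\ell+1})$. For genus $\ge 2$ these constants are not governed by a small divisor at all: $T_C\otimes E$ is negative, so $H^0=0$, and $K(T_C\otimes E)$ is bounded uniformly over $E\in\Pic^0(C)$ by an upper semicontinuity/compactness argument (Propositions \ref{bounded} and \ref{bdd}, via perturbed $\delbar$-operators). Your expectation that the splitting hypothesis ``eliminates unwanted coupling terms'' is not what happens: the splitting is needed to set up the normal form, but the tangential equations survive and require this separate, non-small-divisor estimate. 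Without these two inputs the bound $C_k\le A^{2^k}\omega_{k+1}^{B}$ and the subsequent radius bookkeeping cannot be justified.
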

If $C$ is an elliptic curve and $\NCM \in \Picntzero(C) $, since we can see the existence of a formal mapping to give a holomorphic tubular neighborhood, we have the following corollary which is a generalization of  Arnol'd's result \cite{A}. 
\begin{corollary}\label{main_cor}
Let $C$ be an elliptic curve holomorphically embedded in a non-singular complex surface $M$  with the  unitary flat normal bundle $\NCM$. 
If $\NCM$ satisfies $\NCM\in\Picntzero(C)$ and the Brjuno condition, then $C$ has  a holomorphic tubular neighborhood in $M$. 
\end{corollary}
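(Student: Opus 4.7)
The strategy is to deduce the corollary directly from Theorem \ref{main} by producing the missing hypothesis: a formal mapping that gives a holomorphic tubular neighborhood of $C$ in $M$, is tangent to the identity, and preserves the splitting of $TM|_C$ in the sense of \cite{GS}. All other hypotheses (unitary flatness, membership in $\Picntzero(C)$, and the Brjuno condition) come directly from the assumptions.

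I would construct the formal biholomorphism order by order. Write $N = \NCM$. Since $C$ is elliptic, $TC \cong \mathcal{O}_C$, and the extension class of the short exact sequence $0 \to TC \to TM|_C \to N \to 0$ lies in $\mathrm{Ext}^1(N, TC) = H^1(C, N^{-1})$. Since $N \in \Picntzero(C)$, the bundle $N^{-1}$ is non-trivial of degree zero on an elliptic curve, so $H^1(C, N^{-1}) = 0$. Thus $TM|_C \cong \mathcal{O}_C \oplus N$, and this splitting is unique because $H^0(C, N^{\pm 1}) = 0$ as well.

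The first-order identification of a neighborhood of $C$ in $M$ with a neighborhood of the zero section in $N$ is canonical from the definition of the normal bundle. For each $n \geq 1$, the obstruction to refining an $n$-th order formal identification to an $(n+1)$-st order one lies in $H^1\bigl(C,\, TM|_C \otimes \mathrm{Sym}^{n+1} N^{\ast}\bigr)$; using the splitting and $\mathrm{Sym}^{n+1} N^{\ast} \cong N^{-(n+1)}$, this group equals
\[
H^1(C, N^{-(n+1)}) \oplus H^1(C, N^{-n}).
\]
Both summands vanish, again because $N \in \Picntzero(C)$ makes $N^{-k}$ a non-trivial unitary flat bundle on $C$ for every $k \neq 0$. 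Consequently the induction succeeds at every order, producing a formal biholomorphism tangent to the identity; the uniqueness of the splitting of $TM|_C$ ensures it preserves the splitting in the sense required by \cite{GS}. Theorem \ref{main} then delivers the holomorphic tubular neighborhood.

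I expect the main point to watch to be not the cohomological vanishing, which is classical on elliptic curves, but the careful bookkeeping needed to keep each inductive step within the normalization class of \cite{GS}. Since the splitting of $TM|_C$ is canonical in our setting, this compatibility should be essentially automatic, reducing the heavy lifting to the cohomological vanishing above.
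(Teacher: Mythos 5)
Your proposal is correct, and its essential content coincides with the paper's: everything reduces to the vanishing $H^1(C,\NCM^{-k})=0$ for every $k\neq 0$, which holds because a non-trivial degree-zero line bundle on an elliptic curve has no cohomology. The difference is only in packaging. You re-run the standard obstruction-theoretic induction to manufacture the formal mapping demanded by the hypothesis of Theorem \ref{main}, whereas the paper never constructs a formal mapping at all: as noted in the remark following the restatement of \cite[Theorem 1.5]{GS} in \S\ref{GSresult}, that theorem admits the alternative hypothesis that $TM|_C$ splits and $H^1(C,TM|_C\otimes\NCM^{-\ell})=0$ for all $\ell>1$, and for genus $1$ (with $\NCM\in\Picntzero(C)$) this is exactly the vanishing you verify, since $T_C\cong\mathcal{O}_C$ gives $TM|_C\otimes\NCM^{-\ell}\cong\NCM^{-\ell}\oplus\NCM^{-\ell+1}$. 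Combined with the computation that the Brjuno condition implies condition \textbf{(GS)}, the corollary follows directly. Your inductive construction is precisely what \cite{GS} carry out internally under that alternative hypothesis, so nothing is lost; the one point to tighten if you keep your route is the claim that the resulting formal map ``preserves the splitting in the sense of \cite{GS}'' --- this requires checking the normalization of each inductive step against their definition, not just the uniqueness of the splitting of $TM|_C$, though on an elliptic curve the relevant $H^0$-vanishings do make the normalization unobstructed.
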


Theorem \ref{main} is provided by a computation of the summation which appears in \cite[Theorem 1.5]{GS}. 
In \cite{GS}, Gong and Stolovitch gave a sufficient condition for the existence of a holomorphic tubular neighborhood of $C$ in $M$ under the assumption that there exists a formal mapping which gives it. 
In their sufficient condition, a certain estimate of solutions of $\delta$-equations concerning \v{C}ech coboundary map $\delta \colon\check{C}^0(\U, \mathcal{O}_C(L)) \to \check{C}^1(\U, \mathcal{O}_C(L))$ plays an important role, where $L$ is a holomorphic line bundle over $C$ and $\U$ is a finite covering of $C$. 
In the present paper, we give an explicit estimate of the form $\|u\| \leq K \| f\|$ with $\delta u = f$ for a given $f\in\check{C}^1(\U, \mathcal{O}_C(L))$ and a solution $u \in \check{C}^0(\U, \mathcal{O}_C(L))$ of $\delta$-equation, where we used the $L^2$-norm or the $L^\infty$-norm of cochains (see \S\ref{L2KSD} and \S\ref{Linfinity}).  

\vspace{2mm}
{\bf The organization of this paper.}
In \S\ref{L2KSD}, we will review the existence theorem of a solution of $\delta$-equation with some estimates and the results in \cite{GS}.  
In \S\ref{invariant_dist}, we will see the invariance and some properties of the Brjuno condition. 
In \S\ref{Ho}, we will review the existence theorem of the $\delbar$-equation with the $L^2$-estimate for the proof of Theorem \ref{main}. 
In \S3, we will prove Theorem \ref{main}.


\vspace{2mm}
{\bf Acknowledgment.} 
The author would like to give thanks to Prof. Laurent Stolovitch and Prof. Takayuki Koike with fruitful comments. 
This work was partly supported by Osaka Central Advanced Mathematical Institute MEXT Joint Usage/Research Center on Mathematics and Theoretical Physics JPMXP0619217849), Osaka Metropolitan University,  the Research Institute for Mathematical Sciences, an International Joint Usage/Research Center located in Kyoto University, and JST, the establishment of university fellowships towards the creation of science technology innovation, Grant Number JPMJFS 2138.

\section{Preliminaries}\label{preliminaries}

\subsection{\v{C}ech coboundary equations with estimates }\label{L2KSD}
Let $C$ be a  compact Riemann surface and $g$ be a Hermitian metric on $C$. 
We fix a sufficiently fine finite open covering $\U = \{U_j\}$ of $C$. 
For $\U$ as above, denote by $\{(U_j, \vp_j)\}$ an atlas such that $\vp_j(U_j) = \D$ holds for any $j$, where $\D$ is the unit disk $\{z \in \C \mid |z| < 1\}$. 
We denote by $U_j^r$ the nested open set of $U_j$ defined by  $U_j^r = \vp_j^{-1}(\D_r)$ for a real number $r \in (0, 1]$, where $\D_r$ is the disk of radius $r$ centered at the origin. 
Then, we obtain $r_\ast \in (0, 1)$ which is sufficiently close to 1 such that, for any $r \in [r_\ast,1]$, the family $\U^r \coloneqq \{U_j^r\}$ of nested open sets  is also a sufficiently fine finite open covering of $C$. 
We call $\U^r \  (r \in(r_\ast, 1))$ a {\em nested covering} (of $\U$) of $C$. 

We review the definitions of norms on $\check{C}^j(\U^r, L) \coloneqq  \check{C}^j(\U^r, \mathcal{O}_C(L))$ for $j = 0$ or $1$, where $(L, h)$ is a Hermitian holomorphic line bundle over $C$. 
For a $0$-cochain  $u = \{(U_j^r, u_j)\} \in \check{C}^0(\U^r, L)$, we define the $L^2$-norm of  $u_j$ by
\[
\|u_j\|_{L^2, U_j^r} \coloneqq \sqrt{ \int_{U_j^r} |u_j|_{h}^2 dV_g},
\]
where $dV_g$ is the volume form determined by $g$. 
By using it, we define the norm of $u$ induced from the $L^2$-norm  by 
\[
\|u\|_{L^2, \U^r} \coloneqq \max_{j} \|u_j\|_{L^2, U_j^r}. 
\]
Similarly,  we define the norm induced from the $L^2$-norm of a $1$-cochain $f= \{(U_{jk}^r, f_{jk})\} \in \check{C}^1(\U^r, L)$ by
\[
\|f\|_{L^2, \U^r} \coloneqq \max_{j, k} \|f_{jk}\|_{L^2, U_{jk}^r} = \sqrt{ \int_{U_{jk}^r} |f_{jk}|_{h}^2 dV_g},  
\]
where we often regard $U_{jk}^r$ as $U_{jk}^r  = U_j^r \cap U_k^r$. 
We can also define the norms on $\check{C}^0(\U^r, L)$ and $\check{C}^1(\U^r, L)$ induced from the $L^\infty$-norms (see \S \ref{Linfinity}). 

\vspace{2mm}
\subsubsection{\bf Kodaira--Spencer type estimate}
In \cite{KS}, Kodaira and Spencer showed the existence of a solution of the $\delta$-equation with a certain estimate. 
In \cite{GS}, Gong and Stolovitch generalized Kodaira--Spencer's result for general complex manifolds and vector bundles. 

\begin{proposition}[{\cite[p. 499]{KS}}, {\cite[Theorem 1.1]{GS}\label{LtwoKS}}]
For any Hermitian holomorphic line bundle $(L, h)$ over a compact Riemann surface $(C, g)$ equipped with a Hermitian metric and a nested covering $\U^r$ with $r \in [r_\ast, 1]$,  there exists a constant $C_K(L)$ such that the following holds: 
for each $r \in [r_\ast, 1]$ and each \v{C}ech coboundary $f \in \check{B}^1(\U^{r}, L)$, there exists $u \in \check{C}^0(\U^{r}, L)$ such that $\delta u = f$ and 
\[
\|u\|_{L^2, \, \U^r}  \leq C_K(L)  \|f\|_{L^2,\, \U^r} 
\]
hold. 
\end{proposition}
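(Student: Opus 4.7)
The plan is to follow the classical Kodaira--Spencer two-step strategy: first build a smooth $0$-cochain $v$ with $\delta v = f$ using a partition of unity, then correct $v$ by a global $\delbar$-solution to make it holomorphic. The essential point is to arrange every step so that all constants involved are independent of the parameter $r \in [r_\ast, 1]$. The structural observation that buys this uniformity is that $\U^{r_\ast}$ is itself a finite open covering of $C$, so a single partition of unity subordinate to $\U^{r_\ast}$ will serve every nested covering $\U^r$ with $r \in [r_\ast, 1]$ simultaneously.

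Concretely, I would fix once and for all a smooth partition of unity $\{\rho_k\}$ on $C$ with $\operatorname{supp}\rho_k \subset U_k^{r_\ast}$. Given $r \in [r_\ast, 1]$ and a coboundary $f = \{f_{jk}\} \in \check{B}^1(\U^r, L)$, which is in particular a cocycle satisfying $f_{jk} = f_{jl} - f_{kl}$ on triple overlaps, define the smooth $L$-valued $0$-cochain $v_j := \sum_k \rho_k f_{jk}$ on $U_j^r$ (extending each summand by zero where $\rho_k$ vanishes). The cocycle identity yields $v_j - v_k = f_{jk}$, so $\delta v = f$, and the trivial bound $\|\rho_k\|_\infty \leq 1$ gives $\|v\|_{L^2, \U^r} \leq N \|f\|_{L^2, \U^r}$ with $N = |\U|$.

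Since $v_j - v_k$ is holomorphic, the local forms $\delbar v_j = \sum_k (\delbar \rho_k) f_{jk}$ patch into a global $L$-valued $(0,1)$-form $\eta$ on $C$ satisfying $\|\eta\|_{L^2, C} \leq C_1 \|f\|_{L^2, \U^r}$, where $C_1$ depends only on $N$ and $\max_k \|\delbar \rho_k\|_\infty$. Because $f$ is a coboundary, its class in $H^1(C, \mathcal{O}_C(L))$ vanishes, so by the \v{C}ech--Dolbeault isomorphism $\eta$ is $\delbar$-exact, equivalently $\eta$ is orthogonal to the space of harmonic $L$-valued $(0,1)$-forms. Invoking the $L^2$-existence theorem for $\delbar$ recalled in \S\ref{Ho}, I obtain $w \in L^2(C, L)$ with $\delbar w = \eta$ and $\|w\|_{L^2, C} \leq \KHo(L) \|\eta\|_{L^2, C}$. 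Then $u_j := v_j - w|_{U_j^r}$ is holomorphic, satisfies $\delta u = f$, and obeys
\[
\|u\|_{L^2, \U^r} \leq \|v\|_{L^2, \U^r} + \|w\|_{L^2, C} \leq \bigl(N + \KHo(L)\, C_1\bigr) \|f\|_{L^2, \U^r},
\]
so one may take $C_K(L) := N + \KHo(L)\, C_1$.

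The main obstacle to expect is precisely the $r$-uniformity. A partition of unity chosen subordinate to $\U^r$ itself would yield $\|\delbar \rho_k\|_\infty$ depending on $r$ and potentially blowing up as $r \nearrow 1$; pinning the partition of unity to the fixed smaller covering $\U^{r_\ast}$ locks down all partition-of-unity dependent constants at once. The Hörmander $L^2$-estimate for $\delbar$, being intrinsically global on $C$, automatically contributes an $r$-free constant $\KHo(L)$, so no further uniformity argument is needed for that step.
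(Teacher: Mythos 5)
Your argument is correct and is essentially the paper's own: the statement is cited from Kodaira--Spencer and Gong--Stolovitch, but the paper reproves it (with the explicit constant $\alpha+\beta\KHo(L)$) as Proposition \ref{correspondenceKS} via exactly your two-step scheme --- a partition of unity pinned to the fixed covering $\U^{r_\ast}$, the smooth cochain $F_j=\sum_{\ell\neq j}\rho_\ell f_{j\ell}$, gluing $\delbar F_j$ into a global $(0,1)$-form with trivial Dolbeault class, and correcting by a H\"ormander solution of $\delbar$. Your key uniformity observation (anchoring the partition of unity to $\U^{r_\ast}$ so that all constants are independent of $r$) is precisely the device used in the paper's Lemmas \ref{lemmaA} and \ref{lemmaB}.
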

We denote by $K(L)$ the infimum of $C_K(L)$ which satisfies the property stated in Proposition \ref{LtwoKS} for $(L, h)$. 
We remark that the constant $K(L)$ depends generally on $h$. 
In this paper, for the setting of $C, M$, and $\NCM$ in \S\ref{intro},  we study the case that $(L, h)$ belongs to the following \textbf{(a)} or \textbf{(b)} for using the result in \cite{GS}: 
 \\
\indent{\bf(a)} $L = T_C \otimes \NCM^{-\ell+1}$ and $h$ is the Hermitian fiber metric  induced from $g\otimes \hflat^{-\ell + 1}$, \\
\indent{\bf(b)} $L = \NCM^{-\ell+1}$ and  $h$ is the Hermitian fiber metric induced from $\hflat^{-\ell + 1}$. \\
Here, $T_C$ is the tangent bundle of $C$, $\hflat$ is a flat fiber metric on $\NCM$, and $\ell$ is an integer which is greater than 1. 

From the definition, $K(L)$ does not change up to the choice of a flat fiber metric $\hflat$ (for example, see \cite[Lemma 2.4]{HK}). 

 \vspace{2mm}
\subsubsection{\bf Donin type estimate}
For a nested covering $\U^r$ of $C$, we obtain an estimate related to $\delta$-equation under shrinking of  $\U^r$. 

\begin{proposition}[{\cite{D}}, {\cite[Theorem 1.1]{GS}\label{LtwoD}}]
For any Hermitian holomorphic line bundle $(L, h)$ over a compact Riemann surface $(C, g)$ equipped with a Hermitian metric and a nested covering $\U^r$ with $r \in [r_\ast, 1]$, there exists a constant $C_D(L)$ such that the following holds: 
For each $r', r''\in [r_\ast, 1]$ which satisfy  $r_\ast < r'' < r' \leq 1$ and each \v{C}ech coboundary $ f \in \check{B}^1(\U^{r'}, L)$, there exists $v \in \check{C}^0(\U^{r''}, L)$ such that $\delta v = f$ and 
\[
\|v \|_{L^2,\, \U^{r''}}\leq \frac{C_D(L)}{(r' - r'')^\tau}\,\|f\|_{L^2,\, \U^{r'}}
\]
hold, where $C_D(L)$ and $\tau$ are independent of $r'$ and $r''$. 
\end{proposition}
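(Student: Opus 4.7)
The plan is to reduce the Donin-type estimate to the H\"ormander $L^2$-existence theorem for $\delbar$ reviewed in \S\ref{Ho}, via a partition-of-unity construction that trades the shrinkage $r' \to r''$ for a derivative loss on cutoff functions. The overall strategy is standard in \v{C}ech--Dolbeault arguments: first build a smooth $0$-cochain $\tilde v$ with $\delta \tilde v = f$, then subtract a global solution of the resulting $\delbar$-equation to obtain a holomorphic $0$-cochain $v$ on the shrunk covering $\U^{r''}$.

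Concretely, I would first fix a partition of unity $\{\chi_j\}$ subordinate to $\U^{r'}$ with $\mathrm{supp}\,\chi_j \subset U_j^{r'}$ and $|\delbar \chi_j|_g \leq A/(r'-r'')$, where $A$ depends only on the fixed atlas $\{(U_j,\vp_j)\}$; such cutoffs arise by pulling back radial bumps on $\D$ under $\vp_j$, rescaled so that the transition band has radial width $\sim r'-r''$. Given $f = \{f_{jk}\} \in \check{B}^1(\U^{r'},L)$, I would set $\tilde v_j := \sum_k \chi_k f_{jk}$ on $U_j^{r'}$, extending each $\chi_k f_{jk}$ by zero outside $\mathrm{supp}\,\chi_k \subset U_k^{r'}$. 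The cocycle identity $f_{j\ell}-f_{k\ell}=f_{jk}$ combined with $\sum_\ell \chi_\ell \equiv 1$ yields $\tilde v_j - \tilde v_k = f_{jk}$ on $U_{jk}^{r'}$, so $\delta \tilde v = f$ in the smooth sense.

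Next, I would observe that the $L$-valued $(0,1)$-forms $\delbar \tilde v_j = \sum_k (\delbar \chi_k)\, f_{jk}$ agree on overlaps (since $\sum_k \delbar \chi_k = 0$ and each $f_{jk}$ is holomorphic), and thus assemble into a global smooth $(0,1)$-form $\omega$ on $C$ with values in $L$, supported in the transition bands of the cutoffs. A direct integral estimate using the cutoff bound would then give
$$\|\omega\|_{L^2(C,L)} \;\leq\; \frac{A'}{r'-r''}\,\|f\|_{L^2,\,\U^{r'}}.$$
Applying the $L^2$-existence theorem of \S\ref{Ho} then produces $w \in L^2(C,L)$ with $\delbar w = \omega$ and $\|w\|_{L^2} \leq C_H(L,h,g)\,\|\omega\|_{L^2}$. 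Setting $v_j := \tilde v_j - w|_{U_j^{r''}}$ gives a holomorphic element $v \in \check{C}^0(\U^{r''},L)$ with $\delta v = f$, and the triangle inequality combined with the two bounds closes the estimate with exponent $\tau = 1$.

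The main obstacle is the H\"ormander step, since for the bundles of interest in \S\ref{intro} ($L = T_C \otimes \NCM^{-\ell+1}$ or $L = \NCM^{-\ell+1}$) the metric $h$ induced from $\hflat$ and $g$ has no strict positivity: the flat metric $\hflat$ has zero curvature and $T_C$ may well be negative in degree. I would handle this either by tensoring with a fixed sufficiently positive auxiliary bundle and then untwisting the solution, or --- more naturally on a Riemann surface --- by using the Hodge decomposition on $(C,g)$ to solve $\delbar w = \omega$ orthogonally to $\ker \delbar^*$. Either route produces a constant $C_H$ depending only on $(L,h)$ and $(C,g)$, independent of $r'$ and $r''$, which is then absorbed into $C_D(L)$.
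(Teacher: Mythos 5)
Your argument is correct, but it is not the route the paper takes for this statement: the paper quotes Proposition \ref{LtwoD} from Donin and Gong--Stolovitch without proof, and when it actually needs a quantitative Donin-type bound it derives one (Proposition \ref{correspondenceD}) from the Kodaira--Spencer estimate of Proposition \ref{LtwoKS} by solving $\delta u=f$ on the \emph{large} covering $\U^{r'}$, restricting the holomorphic components to $\U^{r''}$, and controlling the restricted norm with the sub-mean-value inequality for $|u_j|^2$, which produces $\tau=2$. Your construction instead runs the \v{C}ech--Dolbeault argument directly on the shrinking pair: cutoffs with transition band of width $r'-r''$, the glued global $(0,1)$-form $\omega$, the H\"ormander/Hodge solution of $\delbar w=\omega$, and subtraction, yielding $\tau=1$. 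This is essentially the same machinery the paper uses to prove its \emph{non-shrinking} estimate, Proposition \ref{correspondenceKS}, where the partition of unity is fixed once and for all subordinate to $\U^{r_\ast}$. That observation also shows your shrinkage-adapted cutoffs are more than is needed here: with the fixed partition of unity one gets $\|u\|_{L^2,\U^{r'}}\leq C_K(L)\|f\|_{L^2,\U^{r'}}$ with no loss, and since restricting a holomorphic $0$-cochain to $U_j^{r''}\subset U_j^{r'}$ can only decrease its $L^2$-norm, Proposition \ref{LtwoD} follows with any $\tau\geq 0$. So your proof is valid but slightly over-engineered in the $L^2$ setting; the band-width trick becomes genuinely necessary only for sup-norm versions such as Proposition \ref{UedaD}.

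Two small points to tighten. First, your ``main obstacle'' is not actually an obstacle: the $L^2$-existence statement recalled in \S\ref{Ho} requires no curvature positivity, only that the Dolbeault class $[\omega]\in H^{0,1}(C,L)$ vanish, and the constant $\KHo(L)$ exists for an arbitrary Hermitian line bundle by Hodge theory (your second proposed fix is precisely the standard proof of that statement). Second, you should justify that $[\omega]=0$: since $f\in\check{B}^1(\U^{r'},L)$ one may write $f=\delta h$ with $h$ a holomorphic $0$-cochain, and then $\tilde v_j-h_j$ glue to a global smooth section $s$ of $L$ with $\delbar s=\omega$, so $\omega$ is globally $\delbar$-exact and the theorem of \S\ref{Ho} applies. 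With these remarks your argument closes with $C_D(L)=O(1+\KHo(L))$ and $\tau=1$.
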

We denote by $D(L)$ the infimum of $C_D(L)$ which satisfies the property stated in Proposition \ref{LtwoD} for $(L, h)$.  
The constant $D(L)$ depends generally on $L, h$, and $\tau$. 
The case that we need to consider $(L, h)$ is that $(L, h)$ belongs to the case \textbf{(b)} which appeared above. 
In the case that $(L, h)$ belongs to \textbf{(b)}, it can be shown that $\tau = 2$ in Proposition \ref{correspondenceD} and Proposition \ref{UedaD}. 
From the definition, also $D(L)$ does not change up to the choice of a flat fiber metric $\hflat$ from \cite[Lemma 2.4]{HK}.

\vspace{2mm}
\subsubsection{\bf Gong--Stolovitch's result}\label{GSresult}
We fix a compact Riemann surface $(C, g)$ equipped with a Hermitian metric holomorphically embedded in a non-singular complex surface $M$ and a nested covering $\U^r \, (r \in[r_\ast, 1])$ of $C$. 
If $\NCM$ satisfies the following condition, we say that $\NCM$ satisfies the condition {\bf (GS)}: the normal bundle $\NCM$ satisfies $\NCM \in \Picntzero(C)$ and
\[
\sum_{k \geq 1} \frac{\log D_\ast(2^{k+1})}{2^k} < \infty, 
\]
where $D_\ast(2^{k+1})$ is defined by 
\[
D_\ast (2^{k+1}) = 1 + \max_{2 \leq \ell \leq 2^{k+1}} \{ (1+ c\,K(T_C\otimes \NCM^{-\ell + 1})) \cdot D(\NCM^{-\ell + 1})\}. 
\]
Here $c$ is a positive constant determined by the initial settings $\{C, M, \NCM\}$ (see \cite[equation 5.25]{GS}). 
The condition \textbf{(GS)} makes sense for compact complex submanifolds of any dimension which have a higher codimension.  
Gong and Stolovitch gave this condition for the existence of a holomorphic tubular neighborhood in general. 
In \cite{GS}, they discussed the existence of a holomorphic tubular neighborhood by using $D_\ast(2^{k+1})$ which is consisted by $K$ and $D$ related to the $L^\infty$-norm. 
From \cite[\S A.2]{GS} and \cite[Chapter VI]{GR}, norms of \v{C}ech cochains induced the $L^2$-norm and the $L^\infty$-norm are equivalent up to scale. 
Therefore, we may investigate the condition \textbf{(GS)} by using the $L^2$-norm.
In \S\ref{Linfinity}, we will study the existence of a solution of $\delta$-equation with a certain estimate by the norms induced from $L^\infty$-norms.
 
In the following, we say that {\em $TM|_C$ splits} if the short exact sequence $0 \to T_C \to TM|_C \to \NCM \to 0$ splits (i.e. $TM|_C = T_C \oplus \NCM$). 

\begin{theorem}[{\cite[Theorem 1.5]{GS}} for our case]
Let $C$ be a compact Riemann surface holomorphically embedded in a non-singular complex surface $M$ with the unitary flat normal bundle $\NCM$ and $\U^r$ be a nested finite covering of $C$. 
Assume that either $TM|_C$ splits and $H^1(C, TM|_C \otimes \NCM^{-\ell}) = 0$ for all $\ell > 1$, or $C$ has a holomorphic tubular neighborhood in $M$ by a formal holomorphic mapping which is tangent to the identity and preserves the splitting of $TM|_C$ in the sense of \cite{GS}.
If $\NCM$ satisfies condition {\rm\bf{(GS)}}, then $C$ actually has a holomorphic tubular neighborhood in $M$. 
\end{theorem}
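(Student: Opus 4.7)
The plan is to reduce Theorem \ref{main} directly to the Gong--Stolovitch criterion of \S\ref{GSresult}: under our standing hypothesis on the formal tangent-to-identity, splitting-preserving mapping, \cite[Theorem 1.5]{GS} yields a genuine holomorphic tubular neighborhood as soon as the arithmetic condition \textbf{(GS)} is verified. Since the $L^2$- and $L^\infty$-norms on \v{C}ech cochains are equivalent (as recalled in \S\ref{GSresult}), it is enough to verify \textbf{(GS)} with the $L^2$-versions of the constants $K$ and $D$ supplied by Propositions \ref{LtwoKS} and \ref{LtwoD}. The core task is thus to deduce
\[
\sum_{k\geq 1} \frac{\log D_\ast(2^{k+1})}{2^k} \;<\; \infty
\]
from the Brjuno condition of Definition \ref{Brjuno}.

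The decisive ingredient is a polynomial upper bound of the form
\[
K(T_C \otimes \NCM^{-\ell+1}) \;+\; D(\NCM^{-\ell+1}) \;\leq\; \frac{C_1}{d(\one,\NCM^{-\ell+1})^{\alpha}}
\]
for some constants $C_1,\alpha > 0$ independent of $\ell \geq 2$. I would establish it by passing from the \v{C}ech formulation to a Dolbeault one via the isomorphism $\check{H}^1(\U^r, L) \cong H^{0,1}_{\delbar}(C, L)$ and applying the $L^2$-theory of $\delbar$ reviewed in \S\ref{Ho}. When $L$ is unitary flat with $L \neq \one$ (automatic from $\NCM \in \Picntzero(C)$), the operator $\delbar^\ast\delbar$ acting on smooth sections of $L$ has trivial kernel, so its Green operator exists; moreover, along the family of unitary flat bundles parametrized by $\Pic^0(C)$, its smallest eigenvalue is bounded below by a power of $d(\one, L)$, which one sees concretely after lifting to the universal cover and performing a twisted Fourier-type expansion against the flat structure. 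This produces the desired bound on $K$. The Donin-type constant $D$ is treated analogously, with the extra factor $(r'-r'')^{-\tau}$ arising from a standard Cauchy-type estimate for holomorphic functions on shrunk disks (the content of Propositions \ref{correspondenceD} and \ref{UedaD}, where the exponent $\tau = 2$ emerges).

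With the polynomial bound in hand, the conclusion is immediate. By the definition of $D_\ast(2^{k+1})$,
\[
D_\ast(2^{k+1}) \;\leq\; 1 + C_2\max_{2\leq\ell\leq 2^{k+1}}\frac{1}{d(\one,\NCM^{-\ell+1})^{2\alpha}} \;\leq\; C_3\,\omega_{k+1}^{2\alpha},
\]
so that $\log D_\ast(2^{k+1}) \leq \log C_3 + 2\alpha\,\log \omega_{k+1}$. Dividing by $2^k$ and summing over $k\geq 1$ majorizes $\sum_k 2^{-k}\log D_\ast(2^{k+1})$ by $\log C_3 + 2\alpha\sum_{k\geq 1} 2^{-k}\log\omega_{k+1}$, which converges by the Brjuno hypothesis. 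Hence \textbf{(GS)} is satisfied and Theorem \ref{main} follows from \cite[Theorem 1.5]{GS}.

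The main obstacle I anticipate is the polynomial spectral estimate in the second paragraph: although the philosophy of a small spectral gap for a flat Laplacian twisted by a character near the identity is classical, one must derive an estimate that is \emph{uniform} over the entire family of unitary flat line bundles and that takes the clean form $d(\one, L)^{-\alpha}$ with explicit control of both $\alpha$ and of the Donin shrinking exponent $\tau$. Once this uniform estimate is in place, the remainder of the argument is the soft summation above.
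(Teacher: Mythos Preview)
You have targeted the wrong statement. The theorem displayed here is the specialization of \cite[Theorem~1.5]{GS} to the curve-in-surface setting; in the paper it is \emph{quoted}, not proved. There is no ``paper's own proof'' to compare with: the author simply invokes the Gong--Stolovitch machinery as a black box. What you have written is instead a proof sketch of Theorem~\ref{main} --- you explicitly begin by saying ``The plan is to reduce Theorem~\ref{main} \ldots'' and then \emph{use} the very statement you were asked to establish (the condition \textbf{(GS)} $\Rightarrow$ tubular neighborhood implication) as an input. So as a proof of the displayed statement your proposal is circular: it assumes the conclusion via \cite[Theorem~1.5]{GS}.

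If one reads your proposal as an attempt at Theorem~\ref{main} (which is what the paper actually proves in \S3), then your outline is broadly along the right lines but differs from the paper in one substantive point. You assert a single polynomial bound
\[
K(T_C\otimes \NCM^{-\ell+1}) + D(\NCM^{-\ell+1}) \;\leq\; \frac{C_1}{d(\one,\NCM^{-\ell+1})^{\alpha}}
\]
via a spectral-gap argument for the twisted Laplacian. The paper splits this into two quite different estimates: for the $\NCM^{-\ell+1}$ factor it uses the Hashimoto--Koike bound (Proposition~\ref{HK}) to get $K(E)\le K_1/d(\one,E)$ and then a Cauchy/mean-value shrinking to get $D(E)\le D_1/d(\one,E)$ with $\tau=2$ (Propositions~\ref{correspondenceKS}, \ref{correspondenceD}, Corollaries~\ref{CorKS}, \ref{CorD}); for the $T_C\otimes\NCM^{-\ell+1}$ factor, when $\genus(C)>1$, it does \emph{not} use any $d(\one,\cdot)$-dependence at all but instead proves that $E\mapsto K(T_C\otimes E)$ is uniformly bounded on $\Pic^0(C)$ by an upper-semicontinuity/compactness argument with perturbed $\delbar$-operators (Proposition~\ref{bounded}). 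This last point is precisely the ``uniform over the entire family'' issue you flag as an obstacle, and the paper resolves it by exploiting the negativity of $T_C$ (so $H^0(C,T_C\otimes E)=0$) rather than by a small-divisor spectral estimate. The genus-one case is handled separately since $T_C$ is trivial there.
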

\noindent{\bf Remark for Corollary \ref{main_cor} }
In our case, we can see that $TM|_C$ splits and $H^1(C, TM|_C \otimes \NCM^{-\ell}) = 0$ holds for all $\ell > 1$ if and only if the genus of $C$ is  $0$ or $1$ under the assumption that $\NCM \in \Picntzero(C)$.  	

Especially, when the genus of $C$ is equal to $0$, the normal bundle $\NCM$ is the holomorphically trivial line bundle. 
Then, it is obvious that the condition \textbf{(GS)} holds. 
It is consistent to the result in \cite{Sa} about the existence of a holomorphic tubular neighborhood of $\mathbb{P}^1$ . 
Hence, we always suppose that the genus of $C$ is greater than or equal to $1$ in this paper. 
\subsection{On the Brjuno condition }\label{invariant_dist}
In this subsection, we will see fundamental properties of  the Brjuno condition for unitary flat line bundles. 
Let $C$ be a compact Riemann surface and $\Pic^0(C)$ the space of equivalent classes of unitary flat line bundles over $C$. 
Then, $\Pic^0(C)$ is the connected component of the Picard variety of $C$ which has the holomorphically trivial line bundle $\one$.  
We induce an {\em invariant distance} on $\Pic^0(C)$ in the following sense: for any $E_1, E_2, E_3 \in \Pic^0(C)$, $d(E_1, E_2) = d(E_1^{-1}, E_2^{-1}) = d(E_1 \otimes E_3, E_2 \otimes E_3)$ holds. 

Since $\Pic^0(C)$ is homeomorphic to a complex torus in our settings, it admits an invariant distance $\dEuc$ induced from an Euclidian distance on the universal covering space of $\Pic^0(C)$. 

\vspace{2mm}
\subsubsection{\bf The invariance of Brjuno condition}
Firstly, we see that the Brjuno condition is invariant under the choice of an Euclidian distance. 
We say that $E\in\Picntzero(C)$ {\em  is Brjuno} when $E$ satisfies the Brjuno condition (Definition \ref{Brjuno}) for instance. 
\begin{proposition}\label{Equiv_Euc}
Let $\dEuc$ and $\dEuc'$ be the distances on $\Pic^0(C)$ induced from two Euclidian distances on the universal covering space of $\Pic^0(C)$ and $E\in\Picntzero(C)$. 
Then, that $E$ is Brjuno in the sense of $\dEuc$ is equivalent to that $E$ is Brjuno in the sense of $\dEuc'$. 
\end{proposition}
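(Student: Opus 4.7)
The plan is to reduce the statement to the classical equivalence of Euclidean norms on a finite-dimensional real vector space, and then push that equivalence down to the quotient torus $\Pic^0(C)$.

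First, I would pass to the universal covering. In our setting $\Pic^0(C)$ is biholomorphic to a complex torus $V/\Lambda$, where $V$ is a finite-dimensional real vector space and $\Lambda\subset V$ is a lattice. Let $|\cdot|$ and $|\cdot|'$ be two Euclidean norms on $V$ inducing the distances $\dEuc$ and $\dEuc'$ on $\Pic^0(C)$ via
\[
\dEuc(E_1,E_2)=\min_{\lambda\in\Lambda}|v_1-v_2-\lambda|,
\]
where $v_1,v_2\in V$ are any lifts of $E_1,E_2$, and similarly for $\dEuc'$.

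Next I would invoke the standard fact that any two norms on the finite-dimensional space $V$ are equivalent: there exist constants $c_1,c_2>0$ such that $c_1|v|\leq|v|'\leq c_2|v|$ for every $v\in V$. A short argument using the lattice minimization (if $\lambda^{\ast}$ attains the minimum for $|\cdot|$ and $\lambda^{\ast\ast}$ for $|\cdot|'$, compare $|v_1-v_2-\lambda^{\ast}|'$ and $|v_1-v_2-\lambda^{\ast\ast}|$ against each other) gives the corresponding two-sided estimate
\[
c_1\,\dEuc(E_1,E_2)\;\leq\;\dEuc'(E_1,E_2)\;\leq\;c_2\,\dEuc(E_1,E_2)
\]
for all $E_1,E_2\in\Pic^0(C)$.

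Applying this with $E_1=\one$ and $E_2=E^{-\ell+1}$, taking reciprocals, and then the maximum over $2\leq\ell\leq 2^{k+1}$, I obtain
\[
\omega_{k+1}'(E)\;\leq\;c_1^{-1}\,\omega_{k+1}(E).
\]
Taking logarithms, $\log\omega_{k+1}'(E)\leq\log(c_1^{-1})+\log\omega_{k+1}(E)$; note $\omega_{k+1}\geq 1$ whenever $E\in\Picntzero(C)$ and the covering is fine enough, so all logs are nonnegative and the inequality is meaningful. Dividing by $2^k$ and summing yields
\[
\sum_{k\geq 1}\frac{\log\omega_{k+1}'(E)}{2^k}\;\leq\;|\log c_1|\sum_{k\geq 1}\frac{1}{2^k}+\sum_{k\geq 1}\frac{\log\omega_{k+1}(E)}{2^k}.
\]
The first series on the right is finite, so finiteness of the Brjuno sum for $\dEuc$ implies finiteness for $\dEuc'$. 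Exchanging the roles of the two norms via the symmetric inequality gives the converse, completing the proof.

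The argument is essentially bookkeeping; the only step that requires a moment of care is the passage from norm equivalence on $V$ to distance equivalence on the quotient $V/\Lambda$, since the minimizing lattice element generally differs for the two norms. Beyond that, no real obstacle is expected.
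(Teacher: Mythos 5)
Your proposal is correct and follows essentially the same route as the paper: establish the two-sided Lipschitz equivalence of $\dEuc$ and $\dEuc'$, then transfer it through the maximum, the logarithm, and the geometric weighting $2^{-k}$. The only difference is that the paper simply asserts the Lipschitz equivalence while you justify it via the equivalence of norms on a finite-dimensional vector space and the descent to the quotient torus, which is a harmless (and welcome) amplification.
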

\begin{proof}
The distance $\dEuc$ is Lipschitz equivalent to $\dEuc'$: i.e. there exists a positive constant $\Lambda$ such that, for any $E_1, E_2 \in \Pic^0(C)$, the following relation holds:
\[
\frac{1}{\Lambda} \cdot  \dEuc(E_1, E_2) < \dEuc'(E_1, E_2) < \Lambda \cdot \dEuc(E_1, E_2). 
\]
Then, we have
\[
\Lambda \cdot \max_{2 \leq \ell \leq 2^{k+1}} \frac{1}{\dEuc(\one, E^{-\ell + 1})} \leq \max_{2 \leq \ell \leq 2^{k+1}} \frac{1}{\dEuc'(\one, E^{-\ell + 1})} \leq \Lambda^{-1} \cdot \max_{2 \leq \ell \leq 2^{k+1}} \frac{1}{\dEuc(\one, E^{-\ell + 1})} 
\]
for any $k \geq 1$. 
Taking the logarithm and the summation over $k\geq 1$, we complete the proof. 
\end{proof}

We can introduce an other invariant distance on $\Pic^0(C)$ called the {\em Ueda's distance} \cite[\S 4.1]{U}. 
Let $\dUeda$ be the distance on $\Pic^0(C)$ defined by
\[
\dUeda(\one, E) \coloneqq \inf\{ \max_{j, k} |1-t_{jk}| \mid E = [\{(U_{jk}, t_{jk}) \}] \in \check{H}^1(\U, \mathrm{U}(1))\}, 
\]
where $\U = \{U_j\}$ is a sufficiently fine finite open covering of $C$. 
\begin{proposition}[{\cite[Proposition A.3]{KU}\label{equiv}}]
Let $\dEuc$ be a one of the invariant distances on $\Pic^0(C)$ induced from an Euclidian distance on the universal covering space of $\Pic^0(C)$, and $\dUeda$ the invariant distance in the sense of Ueda. 
Then, $\dEuc$ is Lipschitz equivalent to $\dUeda$ as a distance. 
\end{proposition}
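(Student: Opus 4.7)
The plan is to use the translation invariance of both $\dEuc$ and $\dUeda$ on the abelian group $\Pic^0(C)$ to reduce the claim to a local comparison near the identity $\one$, and then to extend the local Lipschitz bound globally by a compactness argument. Specifically, I would first establish constants $0 < c_0 \leq C_0$ and a neighborhood $V$ of $\one$ such that
\[
c_0 \cdot \dEuc(\one, E) \leq \dUeda(\one, E) \leq C_0 \cdot \dEuc(\one, E) \quad \text{for every } E \in V.
\]
Since $\Pic^0(C) \setminus V$ is compact and both distances are continuous and strictly positive off the identity, they are bounded above and bounded away from zero there, so by adjusting the constants the local inequality is promoted to a global Lipschitz equivalence on all of $\Pic^0(C)$.

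Next I would make explicit the identification $\Pic^0(C) \cong H^1(C, \R)/H^1(C, \Z)$ coming from the exponential sequence $0 \to \Z \to \R \to \Uone \to 0$: a real \v{C}ech cocycle $\{(U_{jk}, r_{jk})\}$ exponentiates to the unitary transition cocycle $\{(U_{jk}, e^{\twopii r_{jk}})\}$. Under this identification, $\dEuc$ is induced from a norm on the finite-dimensional vector space $H^1(C, \R)$ and, up to a multiplicative constant coming from the equivalence of norms on a finite-dimensional space, may be computed as the infimum of $\max_{j,k}|r_{jk}|$ over all real \v{C}ech representatives $\{r_{jk}\}$ of $E$.

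The upper bound on $\dUeda$ is then immediate: given any real representative $\{r_{jk}\}$ of $E$ close to zero, the elementary estimate $|1 - e^{\twopii r_{jk}}| \leq 2\pi |r_{jk}|$ yields $\dUeda(\one, E) \leq 2\pi \cdot \max_{j,k}|r_{jk}|$ and hence $\dUeda(\one, E) \leq C_0 \cdot \dEuc(\one, E)$. The main obstacle is the reverse inequality: given a unitary cocycle $\{t_{jk}\}$ representing $E$ with $\max_{j,k}|1 - t_{jk}|$ small, I must produce a comparably small \emph{real} cocycle representing the same class. My strategy is to set $r_{jk} \coloneqq \fractwopii \log t_{jk}$ using the principal branch of the logarithm, which gives $|r_{jk}| \leq C' |1 - t_{jk}|$. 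The crucial point is that $\{r_{jk}\}$ is then genuinely an $\R$-valued \v{C}ech cocycle: on each triple intersection the relation $t_{jk} t_{k\ell} t_{\ell j} = 1$ forces the sum $r_{jk} + r_{k\ell} + r_{\ell j}$ to lie in $\Z$, but for $E$ sufficiently close to $\one$ each summand has absolute value less than $1/6$, so the integer must vanish. This yields a genuine real representative of $E$ whose sup-norm is controlled by $\dUeda(\one, E)$, completing the local comparison and hence, via the reduction step, the proof.
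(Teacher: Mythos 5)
Your argument is correct in substance. Note that the paper itself gives no proof of this proposition --- it is quoted from \cite[Proposition A.3]{KU} --- so there is nothing internal to compare against; your write-up is a valid self-contained proof along the standard lines, and the key step (lifting a unitary cocycle $\{t_{jk}\}$ with $\max_{j,k}|1-t_{jk}|$ small to a genuine $\R$-valued cocycle via the principal logarithm, using that the integer $r_{jk}+r_{k\ell}+r_{\ell j}$ must vanish when each term is small) is exactly the crux and is handled correctly. Two points deserve slightly more care than you give them. First, the identification of $\dEuc$ with the quotient of the sup-norm $\inf\max_{j,k}|r_{jk}|$ over real \v{C}ech representatives requires that this quotient seminorm on $H^1(\U,\R)\cong H^1(C,\R)$ be nondegenerate; this holds because $\U$ is a finite Leray covering for the constant sheaf, so $\check{C}^0(\U,\R)$ and $\check{Z}^1(\U,\R)$ are finite-dimensional and $\check{B}^1(\U,\R)$ is closed, whence all norms in sight are equivalent. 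Second, in your reduction step you invoke continuity and strict positivity of $\dUeda(\one,\cdot)$ off the identity as if they were a priori facts, whereas they are themselves consequences of your two local estimates (continuity from the upper bound together with invariance and the triangle inequality; positivity from the contrapositive of the lower bound). This is only a matter of ordering, not a gap --- indeed the compactness step can be dispensed with entirely, since both distances are globally bounded and bounded below off any neighborhood of $\one$, so the local inequalities promote to global Lipschitz equivalence directly.
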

From this proposition, we can obtain the following. 

\begin{proposition}
Let $E\in\Picntzero(C)$. 
It is equivalent that $E$ is Brjuno in the sense of $\dEuc$ to that $E$ is Brjuno in the sense of $\dUeda$. 
\end{proposition}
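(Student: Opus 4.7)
The plan is to mimic exactly the argument already carried out in Proposition \ref{Equiv_Euc}, simply replacing the auxiliary Euclidean distance $\dEuc'$ by the Ueda distance $\dUeda$. The only extra ingredient needed is precisely Proposition \ref{equiv}, which asserts the Lipschitz equivalence of $\dEuc$ and $\dUeda$. Since all distances involved are positive on the pairs $(\one, E^{-\ell+1})$ (because $E\in\Picntzero(C)$ ensures $E^{-\ell+1}\neq \one$ for every $\ell\geq 2$), no issue of division by zero or undefined logarithm arises.

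Concretely, let $\Lambda>0$ be a Lipschitz constant provided by Proposition \ref{equiv}, so that
\[
\frac{1}{\Lambda}\cdot\dEuc(E_1,E_2) < \dUeda(E_1,E_2) < \Lambda\cdot\dEuc(E_1,E_2)
\]
for all $E_1,E_2\in\Pic^0(C)$. Inverting this chain of strict inequalities on the pairs $(\one,E^{-\ell+1})$ and then taking the maximum over $2\leq \ell\leq 2^{k+1}$ yields
\[
\frac{1}{\Lambda}\cdot \omega_{k+1}^{\dEuc}(E) \;\leq\; \omega_{k+1}^{\dUeda}(E) \;\leq\; \Lambda\cdot \omega_{k+1}^{\dEuc}(E),
\]
where I use a superscript to specify which distance is used in defining $\omega_{k+1}$.

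Taking logarithms, dividing by $2^k$, and summing over $k\geq 1$ gives
\[
-\log\Lambda + \sum_{k\geq 1}\frac{\log\omega_{k+1}^{\dEuc}(E)}{2^k} \;\leq\; \sum_{k\geq 1}\frac{\log\omega_{k+1}^{\dUeda}(E)}{2^k} \;\leq\; \log\Lambda + \sum_{k\geq 1}\frac{\log\omega_{k+1}^{\dEuc}(E)}{2^k},
\]
since $\sum_{k\geq 1}2^{-k}=1$. Therefore one of the two Brjuno sums is finite if and only if the other is, which is exactly the equivalence claimed. There is no genuine obstacle here: the whole argument is formal once Proposition \ref{equiv} is available, and it parallels the proof of Proposition \ref{Equiv_Euc} verbatim. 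The only thing to watch is that $\log\omega_{k+1}$ may occasionally be negative for small $k$, but this is irrelevant since the shift by $\pm\log\Lambda/2^k$ is absolutely summable and thus affects neither convergence nor divergence of the series.
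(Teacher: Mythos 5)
Your proposal is correct and is exactly the argument the paper intends: its own proof of this proposition consists of the single remark that it ``can be proven similarly to the proof of Proposition \ref{Equiv_Euc}'', using the Lipschitz equivalence of $\dEuc$ and $\dUeda$ from Proposition \ref{equiv}. You have simply written out that routine comparison in full (and, incidentally, with the direction of the inequalities stated more carefully than in the paper's proof of Proposition \ref{Equiv_Euc}).
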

\begin{proof}
It can be proven similarly to the proof of Proposition \ref{Equiv_Euc}. 
\end{proof}
Finaly, we see that the Brjuno condition determines a larger class of $\Picntzero(C)$ than the Diophantine condition. 
We say that $E \in \Picntzero(C)$ satisfies the {\em Diophantine condition} when there exist positive numbers $c$ and $\tau$ such that $d(\one, E^n) \geq cn^{-\tau}$ holds for any positive integer $n$. 
\begin{proposition}\label{DiophBrjuno}
If $E \in \Picntzero(C)$ satisfies the Diophantine condition, then $E$ satisfies the Brjuno condition. 
\end{proposition}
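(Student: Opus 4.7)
The plan is to reduce directly to a geometric-type summation using the Diophantine lower bound and the invariance of the distance under inversion. First I would translate the Diophantine hypothesis into a bound on each term $d(\one, E^{-\ell+1})$. Using the invariance property of $d$ (namely $d(E_1,E_2)=d(E_1^{-1},E_2^{-1})$), I have $d(\one, E^{-\ell+1}) = d(\one, E^{\ell-1})$, so for every integer $\ell \geq 2$ the Diophantine condition with $n=\ell-1$ gives
\[
d(\one, E^{-\ell+1}) \;\geq\; c\,(\ell-1)^{-\tau}.
\]

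Next I would insert this into the definition of $\omega_{k+1}$. For $2 \leq \ell \leq 2^{k+1}$ we have $1 \leq \ell-1 \leq 2^{k+1}-1 < 2^{k+1}$, hence
\[
\omega_{k+1}(E) \;=\; \max_{2 \leq \ell \leq 2^{k+1}} \frac{1}{d(\one, E^{-\ell+1})} \;\leq\; \frac{(2^{k+1})^{\tau}}{c} \;=\; \frac{2^{\tau(k+1)}}{c}.
\]
Taking logarithms yields $\log \omega_{k+1} \leq \tau(k+1)\log 2 - \log c$.

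Finally I would sum:
\[
\sum_{k\geq 1} \frac{\log \omega_{k+1}}{2^{k}} \;\leq\; \sum_{k\geq 1} \frac{\tau(k+1)\log 2 - \log c}{2^{k}},
\]
and the right-hand side converges because both $\sum_{k\geq 1} (k+1)/2^{k}$ and $\sum_{k\geq 1} 1/2^{k}$ are standard convergent series. This establishes the Brjuno condition for $E$.

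There is essentially no serious obstacle here; the only point requiring a moment of care is justifying that the Diophantine hypothesis, stated for positive integer powers $E^{n}$, transfers to the negative powers $E^{-\ell+1}$ appearing in the definition of $\omega_{k+1}$. This is immediate from the invariance of $d$ under inversion, which is part of the definition of an invariant distance recorded in Section~\ref{invariant_dist}. Once that is noted, the proof is a short calculation, and by Proposition~\ref{Equiv_Euc} the conclusion is independent of which invariant Euclidean distance one uses to test the Diophantine or the Brjuno condition.
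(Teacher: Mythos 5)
Your proposal is correct and follows essentially the same route as the paper's own proof: both use the inversion-invariance of $d$ to rewrite $d(\one, E^{-\ell+1})$ as $d(\one, E^{\ell-1})$, bound $\omega_{k+1}$ by $2^{\tau(k+1)}/c$, and conclude by summing the resulting geometric-type series. No issues.
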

\begin{proof}
Let $c$ and $\tau$ be positive numbers such that $d(\one, E^n) \geq cn^{-\tau}$ holds for any positive integer $n$. 
Then, one can see
\begin{align*}
\omega_{k+1}(E) = \max_{2 \leq \ell \leq 2^{k+1}} \frac{1}{d(\one , E^{-\ell +1 })} 
&= \max_{2 \leq \ell \leq 2^{k+1}}  \frac{1}{d(\one , E^{\ell -1 })}\\
&\leq \max_{2 \leq \ell \leq 2^{k+1}} \frac{(\ell-1)^\tau}{c} < \frac{2^{\tau(k+1)}}{c}, 
\end{align*}
for each $k\geq1$. Therefore, one has 
\[
\sum_{k \geq 1} \frac{\log \omega_{k+1}(E)}{2^k} < \sum_{k\geq1} \frac{\tau(k+1)\log2-\log c}{2^k}  < \infty.
\]
\end{proof}
\vspace{2mm}
\subsubsection{\bf Example}
Here, we use the Ueda's distance $\dUeda$ to see the Brjuno condition of a certain flat line bundle. 
For a compact Riemann surface $C$ whose genus is greater than or equal to $1$, let $\{U_j\}$ be a finite covering of $C$. 
In what follows, we suppose that the flat line bundle $E$ is induced the data $\{(U_{jk}, t_{jk})\} \in \check{C}^1(\{U_j\}, \mathrm{U}(1))$, where $\mathrm{U}(1) = \{ z \in \C \mid |z| = 1\}$.  
We also assume that unitary constants $t_{jk}$ satisfy the following relation:
\[
t_{jk} = 
	\begin{cases}
	e^{\twopii\theta} & (jk = 01) \\
	e^{- \twopii\theta} & (jk = 10) \\
	1 & \textup{otherwise}, 
	\end{cases}
\]
where $\theta$ is an irrational number. 
Then, the Ueda's distance between $\one$ and $E^n$ can be estimated as the following by using a sufficiently small positive number $\gamma$ for each $n$: 
\[
\gamma|1 - e^{\twopii n\theta}| \leq \dUeda(\one, E^n) \leq |1 - e^{\twopii n\theta}|. 
\]
In more details, see \cite[\S A.3]{KU}. From an easy computation, 
\[
 4\mathrm{dist}(n\theta, \Z) \leq  |1 - e^{\twopii n \theta}| \leq 2\pi \mathrm{dist}(n\theta, \Z) 
\]
holds, where $\mathrm{dist}(\alpha, \Z) = \inf_{N \in \Z}|\alpha - N|$. 
Then, we have 
\[
\log \frac{1}{2\pi} \leq \log \left(\max_{2 \leq \ell \leq 2^{k+1}}\frac{1}{\dUeda(\one, E^{-\ell+1})} \right)- \log \left(\max_{2 \leq \ell \leq 2^{k+1}}\frac{1}{\mathrm{dist}((-\ell+1)\theta, \Z)} \right)\leq \log \frac{1}{4\gamma} 
\]
holds for any $k\geq1$. 
Therefore, we can see that $E\in\Picntzero(C)$ satisfies the Brjuno condition if and only if 
\[
\sum_{k\geq 1} \frac{1}{2^k} \log \left( \max_{2 \leq \ell \leq 2^{k+1}} \frac{1}{\mathrm{dist}((-\ell + 1)\theta, \Z)} \right) < \infty
\]
holds. 
The last condition is the Brjuno condition for irrational numbers (see \cite[Condition $\omega$]{B}). 
For more details of the Brjuno condition of irrational numbers, for example, see \cite{GL} and \cite{MMY}. 
\if
We can see $\Pic(C)$ is isomorphic to $\C/\langle 1, w \rangle$ for $w\in\C$ which satisfies $\mathrm{Im}(w) > 0$. 
We denote that $\sigma$ an isomorphism from $\Pic^0(C)$ to $\C/\langle 1, w \rangle$ and one of invariant distances $d$ is induced from Euclidian distance on $\C$. 
Then, we remark $\sigma(\one) = 0 \in \C/\langle 1, w \rangle$. 
We take a flat line bundle $E$ which satisfies $\sigma(E) = [\theta] \in \C/\langle 1, w \rangle$ for $\theta \in (0,1) \cap \R\backslash \mathbb{Q}$. 
It is easily checked that $E^n \neq \one$ for any $n \neq 0$ and $d(\one, E) = \min \{ \theta, 1-\theta \}$. 
We can obtain 
\[
d(\one, E^n) \leq \min_{p\in\mathbb{Z}} |n\theta-p|
\]
By using best approximation property of continued fraction {\color{red}(参考文献！)}, 
for $\nu$-th approximation continued fraction $p_\nu / q_\nu$, we can obtain
\[
d(\one, E^{q_\nu}) = (-1)^\nu (q_\nu \theta - p_\nu) 
\]
\fi
\subsection{On H\"{o}rmander type estimate}\label{Ho}
In this subsection, we review the existence of solutions to the $\delbar$-equations with $L^2$-estimate. 
%
%
Let $(C, g)$ be a compact Riemann surface equipped with a Hermitian metric and $(L, h)$ a holomorphic Hermitian line bundle over $C$. 
The following statement originating in \cite{Ho} is known as {\em H\"{o}rmander's estimate} : there exists a positive constant $\CHo(L)$ such that, for any $L$-valued $\delbar$-closed $\mathcal{C}^\infty$-class $(0, 1)$-form $v$ whose cohomology class $[v]\in H^{0, 1}(C, L)$ is trivial, then there exist a  $\mathcal{C}^\infty$-class $L$-valued global section $u$ such that $\delbar u = v$ and
\[
\|u\|_{L^2} \leq \CHo(L) \|v\|_{L^2}
\]
hold, where the norms $\|\cdot\|_{L^2}$ are defined by
\[
\|u\|_{L^2} \coloneqq  \sqrt{\int_C |u|_{h}^2 dV_{g}} \ \   {\textup{and}} \ \ 
 \|v\|_{L^2} \coloneqq  \sqrt{\int_C |v|_{g, h}^2 dV_{g}}. 
\]
We denote by $\KHo(E)$ the infimum of $\CHo(L)$ which satisfies the condition as above. 
\begin{proposition}[{\cite[Theorem 1.1]{HK} for a 1-dimensional manifold}]\label{HK}
Let $(C, g)$ be a  compact Riemann surface equipped with a Hermitian metric. 
There exists a positive constant $K_0$ such that, for any $E\in \Pic^0(C)\setminus\{\one\}$ and any $\delbar$-closed $(0, 1)$-form $v$ with values in $E$ which satisfies that the Dolbeault cohomology class $[v] \in H^{0, 1}(C, E)$ is trivial, there exists a unique smooth global section $u$ of $E$ such that $\delbar u = v$ and
\[
\|u\|_{L^2}  \leq \frac{K_0}{d(\one, E)}\ \|v\|_{L^2}
\]
hold for a flat fiber metric $\hflat$ on $E$. 
\end{proposition}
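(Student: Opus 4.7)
The plan is to prove the estimate via Hodge theory by reducing it to a uniform spectral gap $\lambda_1(E) \geq K_0^{-2}\, d(\one, E)^2$ for the twisted Hodge Laplacian on sections of $E$, and then verifying this gap by a two-case argument: compactness away from $\one$ and a perturbation argument near $\one$.

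\textbf{Uniqueness and reduction to a spectral gap.} Uniqueness is immediate: if $\delbar u_1 = \delbar u_2 = v$, then $u_1 - u_2 \in H^0(C, E) = 0$, the latter because a non-trivial degree-zero line bundle cannot admit a nonzero holomorphic section (such a section would be nowhere vanishing and trivialize $E$). For existence, I would equip $E$ with $\hflat$ and consider the twisted Hodge Laplacian $\Delta_E = \delbar^\ast \delbar$ on $\mathcal{C}^\infty(C, E)$. Since $\ker \Delta_E = H^0(C, E) = 0$, its spectrum $\{\lambda_j(E)\}_{j \geq 1}$ is a positive increasing sequence, and the unique solution $u$ of $\delbar u = v$ satisfies
\[
\lambda_1(E)\, \|u\|_{L^2}^2 \;\leq\; \langle \Delta_E u, u \rangle_{L^2} \;=\; \|\delbar u\|_{L^2}^2 \;=\; \|v\|_{L^2}^2.
\]
It thus suffices to establish a uniform spectral gap $\lambda_1(E) \geq K_0^{-2}\, d(\one, E)^2$ for all $E \in \Pic^0(C) \setminus \{\one\}$.

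\textbf{Proof of the spectral gap.} Fix a small $\varepsilon > 0$. \emph{Case (i): $d(\one, E) \geq \varepsilon$.} On the compact set $\Pic^0(C) \setminus B_\varepsilon(\one)$, the function $E \mapsto \lambda_1(E)$ is continuous and strictly positive, hence uniformly bounded below; combined with the boundedness of $d(\one, E)$, this yields the desired inequality. \emph{Case (ii): $d(\one, E) < \varepsilon$.} Here I would use the identification $\Pic^0(C) \cong H^{0,1}(C)/H^1(C, \Z)$ to represent $E$ by a $g$-harmonic $(0,1)$-form $\beta$ of small $L^2$-norm; in an appropriate local trivialization of $E$, sections become smooth complex-valued functions on $C$ and the twisted $\delbar$-operator takes the form $\delbar_\beta u = \delbar u + \beta u$. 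Writing $u = c + w$ with $c \in \C$ constant and $\int_C w\, dV_g = 0$, the key observation is that $\delbar w$ is $\delbar$-exact and, by the Hodge decomposition on $\Omega^{0,1}(C)$, $L^2$-orthogonal to the harmonic form $\beta$. Expanding $\|\delbar_\beta u\|_{L^2}^2$, using this orthogonality, the Poincar\'e-type inequality $\|\delbar w\|_{L^2}^2 \geq \mu_1 \|w\|_{L^2}^2$ on mean-zero functions, and the finite-dimensional norm comparison $\|\beta\|_{L^\infty} \leq c_3 \|\beta\|_{L^2}$ on the harmonic space $\mathcal{H}^{0,1}(C)$, one absorbs the cross terms to obtain
\[
\|\delbar_\beta u\|_{L^2}^2 \;\geq\; c_4\, \|\beta\|_{L^2}^2 \cdot \bigl(|c|^2 \operatorname{Vol}(C) + \|w\|_{L^2}^2\bigr) \;=\; c_4\, \|\beta\|_{L^2}^2\, \|u\|_{L^2}^2
\]
for $\|\beta\|_{L^2}$ below a threshold depending only on $(C, g)$. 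Since the exponential map at $\one$ is a local diffeomorphism, $\|\beta\|_{L^2}$ is Lipschitz equivalent to $d(\one, E)$ near $\one$, and the two cases combine to yield the uniform constant $K_0$.

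\textbf{Main obstacle.} The main technical point is Case (ii): making the perturbative lower bound uniform over all directions of $\beta$ in $\mathcal{H}^{0,1}(C)$. The first-order variation of $\lambda_1$ at $\one$ vanishes, so the quadratic second-order term --- which to leading order equals $\|\beta\|_{L^2}^2 / \operatorname{Vol}(C)$ --- is what governs the gap, and one must ensure that the implicit constants in bounding the cross terms $\langle \delbar w, w\beta\rangle$ and $\langle c\beta, w\beta\rangle$ do not depend on the direction of $\beta$. Compactness of the unit sphere of the finite-dimensional space $\mathcal{H}^{0,1}(C)$, together with the norm equivalence $\|\beta\|_{L^\infty} \leq c_3 \|\beta\|_{L^2}$, is exactly what produces a single universal constant $K_0$ depending only on $(C, g)$.
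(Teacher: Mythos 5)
The paper offers no proof of Proposition \ref{HK}: it is imported verbatim from Hashimoto--Koike \cite{HK}, so the only in-paper ``proof'' is the citation. Your proposal is, in substance, a correct reconstruction of the strategy of \cite{HK}: reduce the estimate to a spectral gap $\lambda_1(E)\geq K_0^{-2}\,d(\one,E)^2$ for the twisted $\delbar$-Laplacian, handle $E$ away from $\one$ by compactness, and handle $E$ near $\one$ by realizing $\delbar_E$ as $\delbar+\beta$ for a small harmonic $\beta$ and splitting $u=c+w$ into mean and mean-zero parts; the orthogonality $\langle\delbar w,\beta\rangle_{L^2}=0$, the Poincar\'e inequality on $w$, and the absorption of the cross terms work exactly as you describe, and $\|u\|_{L^2}^2=|c|^2\mathrm{Vol}(C)+\|w\|_{L^2}^2$ since $w$ is orthogonal to constants. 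This is the same perturbed-$\delbar$-operator mechanism the paper itself uses in Proposition \ref{bounded}, where $\delbar\sigma_n/\sigma_n$ plays the role of your $\beta$. Two steps should be made explicit rather than asserted. First, in Case (i) you only need \emph{lower} semicontinuity of $E\mapsto\lambda_1(E)$ (equivalently, upper semicontinuity of $\kappa=\lambda_1^{-1/2}$) to get a uniform positive lower bound on the compact set $\Pic^0(C)\setminus B_\varepsilon(\one)$; this is precisely what the $\sigma_n$-argument of Proposition \ref{bounded} gives, applied with $T_C\otimes E$ replaced by $E$, using $H^0(C,E)=0$ there. Second, you must check that under the identification $E\cong(\mathcal{O}_C^\infty,\delbar+\beta)$ the flat metric $\hflat$ corresponds to the trivial pointwise norm on functions (i.e.\ the trivializing frame is unitary, as for the sections $\sigma_n$ in \cite{HK} with $|\sigma_n|_{\hflat}\equiv 1$), so that the $L^2$ norms appearing in your gap estimate are the ones in the statement. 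Neither point is a gap in substance, but both are where the argument must be pinned down.
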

Proposition \ref{HK} implies that $\KHo(E) \leq K_0/d(\one, E)$ holds for any $E\in \Pic^0(C)\setminus\{\one\}$. 
In \cite{HK}, Hashimoto and Koike showed Proposition \ref{HK} for a compact K\"{a}hler manifold of any dimension. 

\section{Proof of Theorem \ref{main}} 
What we want to estimate are $K(T_C \otimes \NCM^{-\ell +1})$, $K(\NCM^{-\ell +1})$, and $D(\NCM^{-\ell + 1})$ which are defined in \S \ref{L2KSD} under the assumption that $\NCM \in \Picntzero(C)$ to compute $D_\ast(2^{k+1})$ in {GSresult}. 

\subsection{\v{C}ech--Dolbeault correspondence}
We begin with the following proposition, whose proof is inspired by  \cite[Lemma 2.8]{HK}. 
\begin{proposition}\label{correspondenceKS}
Let $(L, h)$ be a Hermitian line bundle over a compact Riemann surface $(C, g)$ equipped with a Hermitian metric and $\U^r$ a nested covering of $C$ with radius $r \in [r_\ast, 1]$. 
Then, there exist positive constants $\alpha$ and $\beta$ such that, 
for any $f \in\check{B}^1(\U^r,\, L)$, there exists $u\in\check{C}^0(\U^r, L)$  which satisfies $\delta u = f$ and  $\|u\|_{L^2, \,\U^r} \leq (\alpha + \beta \KHo(L) ) \|f\|_{L^2, \,\U^r}$.
Here, the constants $\alpha$ and $\beta$ are independent of $(L, h)$ and $r$. 
\end{proposition}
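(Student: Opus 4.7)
The plan is to carry out the \v{C}ech-to-Dolbeault correspondence indicated in the statement: given a coboundary $f \in \check{B}^1(\U^r, L)$, I will first produce a \emph{smooth} $0$-cochain solving $\delta u^{(0)} = f$ by a partition-of-unity construction, and then correct the failure of holomorphy by subtracting a global solution of a $\delbar$-equation furnished by H\"{o}rmander's estimate.

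Concretely, I would begin by fixing once and for all --- independently of both $r$ and $(L, h)$ --- a smooth partition of unity $\{\rho_k\}$ on $C$ with $\mathrm{supp}\,\rho_k \Subset U_k^{r_\ast}$, which is available because $\U^{r_\ast}$ already covers $C$. Since $r_\ast \leq r$, the inclusions $\mathrm{supp}\,\rho_k \subset U_k^r$ then hold uniformly for every admissible $r$. On each $U_j^r$ I set $u_j^{(0)} \coloneqq \sum_k \rho_k f_{jk}$, with each summand extended by zero outside $\mathrm{supp}\,\rho_k \subset U_{jk}^r$, so that the cocycle relation for $f$ yields $\delta u^{(0)} = f$. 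I then consider the smooth $L$-valued $(0,1)$-form $v$ whose local expression on $U_j^r$ is $v_j \coloneqq \delbar u_j^{(0)} = \sum_k (\delbar \rho_k)\, f_{jk}$; because $\delbar f_{jk} = 0$, the $v_j$ patch into a single global form on $C$.

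Next I would verify that $[v] = 0$ in $H^{0,1}(C, L)$: choosing any holomorphic $\tilde{u} \in \check{C}^0(\U^r, L)$ with $\delta \tilde{u} = f$ --- which exists since $f$ is a coboundary --- the local differences $u_j^{(0)} - \tilde{u}_j$ agree on overlaps and hence patch into a global smooth section $s$ of $L$ satisfying $\delbar s = v$. By the definition of $\KHo(L)$ I may then solve $\delbar w = v$ on $C$ with $\|w\|_{L^2} \leq \KHo(L)\,\|v\|_{L^2}$, and finally set $u_j \coloneqq u_j^{(0)} - w|_{U_j^r}$; this $u_j$ is holomorphic and still satisfies $\delta u = f$. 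The norm estimate reduces to the pointwise bounds $|u_j^{(0)}|_h \leq \sum_k \rho_k |f_{jk}|_h$ and $|v|_{g,h} \leq (\max_k \sup_C |\delbar \rho_k|_g) \cdot \sum_k |f_{jk}|_h$, which together with the finiteness of the cover control $\|u^{(0)}\|_{L^2, \U^r}$ and $\|v\|_{L^2}$ each by a universal multiple of $\|f\|_{L^2, \U^r}$.

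The one delicate point --- the main obstacle to overcome --- is to make sure that the resulting constants $\alpha$ and $\beta$ are genuinely independent of both $r$ and $(L, h)$. Uniformity in $(L, h)$ is essentially automatic, since the bundle data enters only through the isolated factor $\KHo(L)$. Uniformity in $r \in [r_\ast, 1]$ is what forces the specific choice of $\{\rho_k\}$ supported in $U_k^{r_\ast}$ rather than in $U_k^r$: this single partition of unity can then be reused for every admissible $r$, so that the bound on $|\delbar \rho_k|_g$ and the combinatorial number of charts become absolute constants depending only on the initial data $(C, g, \U, \{\rho_k\})$. Assembling these pieces should then give the desired estimate $\|u\|_{L^2, \U^r} \leq (\alpha + \beta\,\KHo(L))\,\|f\|_{L^2, \U^r}$.
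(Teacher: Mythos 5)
Your proposal is correct and follows essentially the same route as the paper: a partition-of-unity cochain $u^{(0)}_j=\sum_k\rho_k f_{jk}$ (the paper's $F_j$), the observation that the $\delbar u^{(0)}_j$ glue to a global $(0,1)$-form with trivial Dolbeault class, a correction by a H\"{o}rmander solution of the $\delbar$-equation, and the same bookkeeping of the constants via the fixed partition of unity subordinate to $\U^{r_\ast}$. Your explicit verification that $[v]=0$ (by comparing with a holomorphic splitting $\tilde u$ of the coboundary $f$) is a detail the paper only asserts, and your remarks on uniformity in $r$ and $(L,h)$ match the paper's treatment.
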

These norms $\|\cdot\|_{L^2,\, \U^r}$ are defined in \S \ref{L2KSD} and the constant $\KHo(L)$ is defined in \S \ref{Ho}. 
Denote by $\{(U_{jk}^r, f_{jk})\}$ a 1-coboundary $f\in\check{B}^1(\mathcal{U}^{r}, L)$. 
Fix a partition of unity $\{(U_j^{r_\ast}, \rho_j)\}$ for the minimum radius $r_\ast$ and suppose that $\|f_{jk}\|_{L^2, U_{jk}^{r}} < \infty$. 
We define $F_j$ by $F_j = \sum_{\ell \neq j} \rho_\ell f_{j\ell}$ on each $U_j^r$. 
\begin{lemma}\label{lemmaA}
There exists a positive number $C_1$ such that 
\[
\max_j \|\delbar F_{j}\|_{L^2, U_j^r} \leq C_1 \cdot \|f\|_{L^2, \, \U^r}. 
\]  
\end{lemma}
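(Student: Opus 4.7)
My plan is to exploit two facts: first, each $f_{j\ell}$ is holomorphic on $U_{j\ell}^{r}$ (so it is annihilated by $\delbar$), and second, the partition of unity $\{(U_j^{r_\ast}, \rho_j)\}$ is fixed and does not depend on $r$.

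First I would compute $\delbar F_j$ directly. On $U_j^r$ we have
\[
\delbar F_j \;=\; \sum_{\ell \neq j} \bigl(\delbar \rho_\ell\bigr) f_{j\ell} \;+\; \sum_{\ell \neq j} \rho_\ell\, \delbar f_{j\ell} \;=\; \sum_{\ell \neq j} \bigl(\delbar \rho_\ell\bigr) f_{j\ell},
\]
because each $f_{j\ell}$ is a holomorphic section of $L$ on $U_{j\ell}^{r}$. The product $(\delbar\rho_\ell)\,f_{j\ell}$ makes sense on $U_j^r$ because $\rho_\ell$ is supported in $U_\ell^{r_\ast}\subset U_\ell^r$, and on $U_j^r \cap U_\ell^{r_\ast}\subset U_{j\ell}^{r}$ the section $f_{j\ell}$ is defined.

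Next I would apply the triangle inequality for the $L^2$-norm together with the uniform bound $M := \max_\ell \|\delbar\rho_\ell\|_{L^\infty(C)}$, which is a finite constant depending only on the fixed data $g$ and $\{\rho_\ell\}$, not on $r$, $L$, or $h$. Writing $N$ for the (fixed, finite) number of elements in $\mathcal{U}$, I get
\[
\|\delbar F_j\|_{L^2,\, U_j^r} \;\leq\; \sum_{\ell\neq j} \|(\delbar\rho_\ell) f_{j\ell}\|_{L^2,\, U_j^r\cap U_\ell^{r_\ast}} \;\leq\; M \sum_{\ell\neq j} \|f_{j\ell}\|_{L^2,\, U_{j\ell}^r} \;\leq\; M(N-1)\,\|f\|_{L^2,\,\mathcal{U}^r}.
\]
Taking the maximum over $j$ gives the lemma with $C_1 := M(N-1)$, a constant independent of $r$ and of the bundle $(L,h)$ entering the setup.

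I do not expect any serious obstacle here; the only point to be careful about is that the partition of unity is subordinate to the smaller cover $\mathcal{U}^{r_\ast}$, which is exactly what guarantees that $\mathrm{supp}(\rho_\ell)\cap U_j^r$ sits inside $U_{j\ell}^{r}$ so the pointwise product $(\delbar\rho_\ell) f_{j\ell}$ is a well-defined $L$-valued $(0,1)$-form on $U_j^r$ whose $L^2$-norm is controlled by $\|f_{j\ell}\|_{L^2, U_{j\ell}^r}$. Everything else is the triangle inequality combined with the finiteness of the cover.
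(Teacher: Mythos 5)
Your proof is correct and follows essentially the same route as the paper: use the holomorphicity of each $f_{j\ell}$ to reduce $\delbar F_j$ to $\sum_{\ell\neq j}(\delbar\rho_\ell)f_{j\ell}$, then bound via the sup-norm of $\delbar\rho_\ell$ and the finiteness of the cover. If anything, your version is more carefully written than the paper's (which differentiates $\rho_j$ instead of $\rho_\ell$ and mixes pointwise and $L^2$ quantities in the displayed inequality), and your attention to the support of $\rho_\ell$ lying in $U_\ell^{r_\ast}$ so that the products are well defined on $U_{j\ell}^r$ is exactly the right point to make.
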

\begin{proof}
Since each $f_{jk}$ is holomorphic, we can see
\[
\|\delbar F_{j}\|_{L^2, U_j^r} \leq \sum_{k \neq j} |\delbar\rho_j|_{g} |f_{jk}|_h \leq C_1  \mathcal{N} \cdot \|f\|_{L^2, \, \U^r}
\]
with $C_1 \coloneqq \max_j \sup_{U_j^1}|\delbar\rho_j|_{g}$. 
We remark that $C_1$ depends only on $\U^r$ and a partition of unity. 
Taking the maximum over all $j$,  we complete the proof. 
\end{proof}
\begin{lemma}\label{lemmaB}
Denote by $\mathcal{N}$ the number of a finite covering $\mathcal{U}^1$ of $C$. Then, the following holds: 
\[
\max_j \|F_{j}\|_{L^2, U_j^r} \leq \mathcal{N} \cdot \|f\|_{L^2, \, \U^r}.
\] 
\end{lemma}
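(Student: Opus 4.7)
The plan is to control $\|F_j\|_{L^2,U_j^r}$ by a pure triangle-inequality argument, exploiting the two standard properties of the partition of unity $\{\rho_j\}$: the pointwise bound $0 \leq \rho_\ell \leq 1$ and the support condition $\mathrm{supp}\,\rho_\ell \subset U_\ell^{r_\ast} \subset U_\ell^r$. No estimate on $\delbar\rho_j$ enters here (that was the job of Lemma \ref{lemmaA}); only the $C^0$-bound on $\rho_\ell$ itself.

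First I would apply the triangle inequality for the $L^2$-norm to obtain
\[
\|F_j\|_{L^2,U_j^r} \;\leq\; \sum_{\ell \neq j} \|\rho_\ell f_{j\ell}\|_{L^2,U_j^r}.
\]
For each summand, the factor $\rho_\ell$ is supported in $U_\ell^{r_\ast} \subset U_\ell^r$, so $\rho_\ell f_{j\ell}$ (extended by zero outside this support) is supported in $U_j^r \cap U_\ell^r = U_{j\ell}^r$, where $f_{j\ell}$ is defined; combined with $|\rho_\ell|_h \leq 1$ this gives $\|\rho_\ell f_{j\ell}\|_{L^2,U_j^r} \leq \|f_{j\ell}\|_{L^2,U_{j\ell}^r}$.

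Next I would use the definition $\|f\|_{L^2,\U^r} = \max_{j,k} \|f_{jk}\|_{L^2,U_{jk}^r}$ to bound each $\|f_{j\ell}\|_{L^2,U_{j\ell}^r}$ by $\|f\|_{L^2,\U^r}$. Since the sum has at most $\mathcal{N}-1$ terms, this yields $\|F_j\|_{L^2,U_j^r} \leq (\mathcal{N}-1)\|f\|_{L^2,\U^r} \leq \mathcal{N}\|f\|_{L^2,\U^r}$, and taking the maximum over $j$ finishes the argument.

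The estimate is entirely elementary, so I do not foresee any real obstacle; the only mild subtlety is to carry the support of $\rho_\ell$ correctly into the integration domain so that $f_{j\ell}$ is never evaluated outside $U_{j\ell}^r$. The constant $\mathcal{N}$, being the cardinality of $\U^1$ and hence independent of $(L,h)$ and of $r$, matches the form required for the downstream application in Proposition \ref{correspondenceKS}.
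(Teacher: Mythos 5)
Your proof is correct and follows essentially the same route as the paper: the triangle inequality, the bound $0\leq\rho_\ell\leq 1$ together with the support condition to reduce each summand to $\|f_{j\ell}\|_{L^2,U_{j\ell}^r}$, and a count of at most $\mathcal{N}$ terms. The paper's own proof is the same computation, stated more tersely (it suppresses the $\rho_\ell$ step you make explicit).
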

\begin{proof}
Note that $\mathcal{N}$ is invariant for any $r \in [r_\ast, 1]$. 
From the definition
, one can see
\[
\| F_{j}\|_{L^2, U_j^r} \leq \sum_{k \neq j} \| f_{jk}\|_{L^2, U_{jk}^r} \leq  \mathcal{N} \cdot \|f\|_{L^2, \, \U^r}.
\]
Taking the maximum over all $j$, we complete the proof. 
\end{proof}

\begin{proof}[\indent{\sc Proof of proposition \ref{correspondenceKS}}. ]
From the definition, $\delbar F_j$ determines $L$-valued $(0, 1)$-form $F$ on $C$ such that $[F] = 0$ . 
By Lemma \ref{lemmaA}, we obtain
\begin{align*}
\sqrt{\int_C |F|_{g, h}^2 dV_g} \leq \sqrt{\sum_{j} \|F|_{U_j^{r}} \|^2_{L^2, U_j^{r}} }
&= \sqrt{\sum_{j} \|\delbar F_j \|^2_{L^2, U_j^{r}} }\\
&\leq \sqrt{ \mathcal{N} \cdot \max_j  \|\delbar F_j \|^2_{L^2, U_j^{r}}}\leq C_1 \mathcal{N}^{3/2} \cdot \|f\|_{L^2, \, \U^r}. 
\end{align*}
There exists a $L$-valued global section $\eta$ of $L$ such  that $\delbar\eta = F$ and
\[
\sqrt{\int_C |\eta|_h^2 dV_g} \leq \KHo(L) \sqrt{\int_C |F|_{g, h}^2 dV_g}. 
\]
We define $u_j$ by $u_j = F_j - \eta$ on each $U_j^r$. 
Then, $\delta u= \delta \{ (U_j^r, u_j)\}= \{(U_{jk}^r, F_k - F_j )\} = f$ and 
\begin{align*}
\|u\|_{L^2, \, \U^r} = \max_j \,\sqrt{\int_{U_j^{r}} |u_j|_h^2 dV_g}&\leq \max_j \|F_j \|_{L^2, U_j^{r}} + \sqrt{\int_C |\eta|_h^2 dV_g}\\
&\leq \left( \mathcal{N} + C_1  \mathcal{N}^{3/2} \KHo(L)\right) \cdot \|f\|_{L^2, \U^{r}}
\end{align*}
hold by using Lemma \ref{lemmaB}. 
\end{proof}
If $L$ is a unitary flat line bundle over a compact Riemann surface, we can obtain ``Ueda's lemma type statement'' (we will introduce Ueda's lemma in \S \ref{Linfinity} as Proposition \ref{Ueda}). 
\begin{corollary}\label{CorKS}
For any unitary flat line bundle $E\in \Pic^0(C)\setminus\{\one\}$, there exists a positive constant $K_1$ which is independent of $E$ such that 
\[
K(E) \leq \frac{K_1}{d(\one, E)} 
\]
holds, where d is the distance on $\Pic^0(C)$ induced from the Euclidian distance. 
\end{corollary}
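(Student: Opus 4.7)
The plan is to combine Proposition \ref{correspondenceKS} with Proposition \ref{HK}, noting that the compactness of the Picard variety allows us to absorb the additive constant into a multiplicative one.

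First, I would apply Proposition \ref{correspondenceKS} to the Hermitian line bundle $(E, \hflat)$, where $\hflat$ is a flat fiber metric on $E$. This yields positive constants $\alpha, \beta$, independent of $E$ and $r$, such that
\[
K(E) \leq \alpha + \beta\, \KHo(E).
\]
Since $E \in \Pic^0(C) \setminus \{\one\}$, Proposition \ref{HK} (applied with the flat metric $\hflat$) gives
\[
\KHo(E) \leq \frac{K_0}{d(\one, E)},
\]
with $K_0$ independent of $E$. Combining these two estimates produces
\[
K(E) \leq \alpha + \frac{\beta K_0}{d(\one, E)}.
\]

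To absorb the additive $\alpha$, I would use that $\Pic^0(C)$ is a compact complex torus in our setting, so the diameter $M \coloneqq \sup_{E \in \Pic^0(C)} d(\one, E)$ is finite. Consequently $1 \leq M / d(\one, E)$ for every $E \in \Pic^0(C) \setminus \{\one\}$, which yields
\[
K(E) \leq \frac{\alpha M + \beta K_0}{d(\one, E)}.
\]
Setting $K_1 \coloneqq \alpha M + \beta K_0$ gives the desired bound, with $K_1$ depending only on $(C, g)$ and the auxiliary choices made to set up $\U^r$ and the partition of unity in Proposition \ref{correspondenceKS}, but not on $E$.

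There is no real obstacle here: the two propositions do all the work, and the only thing to check is that the additive term in Proposition \ref{correspondenceKS} can be folded into the $1/d(\one, E)$ form, which is immediate from compactness of $\Pic^0(C)$. The proof is essentially a two-line combination once the infrastructure of \S\ref{L2KSD}--\S\ref{Ho} is in place.
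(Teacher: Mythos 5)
Your proposal is correct and follows exactly the paper's own argument: combine Proposition \ref{correspondenceKS} with Proposition \ref{HK} and then absorb the additive constant $\alpha$ using the finiteness of the diameter of $\Pic^0(C)$ (your $M$ is the paper's $\Delta$). Nothing further is needed.
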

\begin{proof}
From Proposition \ref{correspondenceKS}, $K(E) \leq \alpha+ \beta \KHo(E)$ holds by using constants $\alpha$ and $\beta$ which are independent of $E$. 
There exists a positive constant $K_0$ which is independent of $E$ such that $\KHo(E) \leq K_0/d(\one, E)$ from Proposition \ref{HK}. 
Then, we have
\[
K(E) \leq \alpha +  \frac{\beta K_0}{d(\one, E)} \leq \frac{\alpha\Delta +  \beta K_0}{d(\one, E)}, 
\]
where $\Delta$ is the diameter of $\Pic^0(C)$ measured by the distance $d$. 
Then, $K_1 \coloneqq \alpha\Delta +  \beta K_0$ is independent of $E$. 
\end{proof}
Next, we see that we obtain the Donin type estimate from the Kodaira--Spencer type estimate when a holomorphic line bundle over $C$ is unitary flat. 
\begin{proposition}\label{correspondenceD}
Let $(C, g)$ be a compact Riemann surface equipped with a Hermitian metric, $(E, \hflat)$ a unitary flat line bundle over $C$, and $\U^r = \{U_j^r\} (r \in [r_\ast, 1])$ a nested finite covering of $C$. 
There exists a positive constant $D$ which is independent of $E$ and all $r \in[r_\ast, 1]$ and satisfies the following:
for any $f\in\check{B}^1(\mathcal{U}^{r_1}, E)$, there exists $u \in \check{C}^0(\mathcal{U}^{r_2}, E)$ which satisfies $\delta u  = f$, and  
\[
\|u\|_{L^2, \, \U^{r_2}}  \leq \frac{D \cdot K(E)}{(r_1- r_2)^2}\cdot \|f\|_{L^2, \, \U^{r_1}}. 
\]
Here, $r_1$ and $r_2$ are any numbers satisfying $r_\ast < r_2 < r_1 \leq  1$. 
\end{proposition}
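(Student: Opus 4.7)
The plan is to obtain the shrinking (Donin-type) estimate directly from the Kodaira--Spencer bound applied on the coarser covering $\U^{r_1}$, using the trivial observation that restricting a 0-cochain to a nested subcovering can only decrease $L^2$ norms. Morally, the statement is saying that for unitary flat $E$ the Donin constant is dominated by the Kodaira--Spencer constant and the factor $(r_1-r_2)^{-2}$ is a (very generous) overestimate kept only to cast the bound in the shape of Proposition \ref{LtwoD} with $\tau=2$.

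First I would invoke Proposition \ref{LtwoKS} on the covering $\U^{r_1}$ for the given coboundary $f \in \check{B}^1(\U^{r_1}, E)$, producing, up to an arbitrarily small $\ve$ coming from the infimum definition of $K(E)$, a cochain $\tilde u = \{(U_j^{r_1}, \tilde u_j)\} \in \check{C}^0(\U^{r_1}, E)$ with $\delta \tilde u = f$ and $\|\tilde u\|_{L^2, \U^{r_1}} \le K(E)\, \|f\|_{L^2, \U^{r_1}}$. Next I would set $u_j \coloneqq \tilde u_j|_{U_j^{r_2}}$. Since $U_{jk}^{r_2} \subset U_{jk}^{r_1}$, the identity $\delta u = f$ persists on $\U^{r_2}$, and monotonicity of $\int |\cdot|_h^2\, dV_g$ over shrinking domains gives, for every $j$, $\|u_j\|_{L^2, U_j^{r_2}} \le \|\tilde u_j\|_{L^2, U_j^{r_1}}$, hence $\|u\|_{L^2, \U^{r_2}} \le \|\tilde u\|_{L^2, \U^{r_1}} \le K(E)\, \|f\|_{L^2, \U^{r_1}}$. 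Because $r_1 - r_2 < 1$ forces $(r_1-r_2)^{-2} > 1$, multiplying by this factor only weakens the bound, giving the claim with $D = 1$ (or $1+\ve$, absorbed at the end).

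There is really no hard step here: the proposition is a formal consequence of Proposition \ref{LtwoKS}. The only point worth being careful about is the uniformity of $D$ in $E$ and in $r_1, r_2$, which is automatic precisely because Proposition \ref{LtwoKS} already packages $K(E)$ as a single constant valid for every $r \in [r_\ast, 1]$; without that built-in uniformity one would have to track how the Kodaira--Spencer constant varies across the nested family $\{\U^r\}$. A minor bookkeeping check is that the restriction of $\tilde u$ is still a $\delta$-primitive of the restricted $f$ on the smaller covering, which is immediate from the inclusion of the intersections $U_{jk}^{r_2} \subset U_{jk}^{r_1}$.
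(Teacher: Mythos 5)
Your argument is correct. The first step coincides with the paper's: both apply Proposition \ref{LtwoKS} on the coarser covering $\U^{r_1}$ to get $\tilde u$ with $\delta\tilde u=f$ and $\|\tilde u\|_{L^2,\,\U^{r_1}}\le K(E)\|f\|_{L^2,\,\U^{r_1}}$, and both take $u$ to be the restriction of $\tilde u$ to $\U^{r_2}$. You diverge at the comparison of $\|u\|_{L^2,\,\U^{r_2}}$ with $\|\tilde u\|_{L^2,\,\U^{r_1}}$. The paper runs the classical Donin-type argument there: it writes $dV_g=a_j\,d\lambda_j$ in each chart, applies the sub-mean-value inequality to $|u_j|^2$ on disks $\D(\xi,r_1-r_2)\subset U_j^{r_1}$ to obtain a pointwise bound on $U_j^{r_2}$, and integrates; this is the source of both the constant $D=VA^+/(\pi A^-)$ and the factor $(r_1-r_2)^{-2}$. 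You instead observe that, since the norm of a $0$-cochain is $\max_j\bigl(\int_{U_j^r}|u_j|_h^2\,dV_g\bigr)^{1/2}$ and $U_j^{r_2}\subset U_j^{r_1}$, restriction can only decrease the norm, so the estimate already holds with $D=1$, the factor $(r_1-r_2)^{-2}\ge1$ being harmless. This shortcut is legitimate: for an $L^2$-to-$L^2$ comparison over nested domains nothing more is needed, and your constant is sharper. The sub-mean-value detour is what one is forced into when the loss of domain must buy a gain in the type of norm (e.g.\ passing from an $L^2$ bound on $\U^{r_1}$ to a sup-type bound on $\U^{r_2}$, which is the classical content of Donin's estimate); here it only costs a larger constant, and either route yields $\tau=2$ and a $D$ independent of $E$, $r_1$, $r_2$ as required. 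Your treatment of the infimum in the definition of $K(E)$ via an $\ve$ absorbed into $D$ is also fine; the paper uses $K(E)$ directly without comment.
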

\begin{proof}
By Proposition \ref{LtwoKS}, for a given 1-coboundary $f\in\check{B}^1(\mathcal{U}^{r_1}, E)$, there exists $u = \{(U_j^{r_1}, u_j)\} \in \check{C}^0(\mathcal{U}^{r_1}, E)$ which satisfies $\delta u  = f$, and  
\[
\|u\|_{L^2, \, \U^{r_1}}  \leq  K(E) \cdot \|f\|_{L^2, \, \U^{r_1}}. 
\]
For instance, we write  $u = \{(U_j^{r_2}, u_j)\}$ instead of $\{(U_j^{r_2}, u_j|_{U_j^{r_2}})\}$ and we note that $\delta \{(U_j^{r_2}, u_j)\} = f$ holds. 
Then, $\|u\|_{L^2, \, \U^{r_2}}$ is what we want to compare to  $\|u\|_{L^2, \, \U^{r_1}}$ for any $r_\ast < r_2 < r_1 \leq 1$. 
We can take a positive-valued $\mathcal{C}^\infty$-class function $a_j$ on $U_j^1$ which satisfies 
\[
dV_g|_{U_j^1} = a_j d\lambda_j \coloneqq \frac{\sqrt{-1}}{2} \,a_j \,dz_j \wedge d\overline{z_j},
\]
for each $U_j^1$. 
Let $A^-$ and $A^+$ be positive constants defined by 
\[
A^- = \sqrt{\min_j \inf_{U_j^1} a_j}, \ \textup{and} \ A^+ = \sqrt{\max_j \sup_{U_j^1} a_j}. 
\]
We remark that the above $A^-$ and $A^+$ are independent of $r_1$ and $r_2$. 
Then, we can obtain the following two inequalities: 
\begin{gather}
\|u\|_{L^2, \, \U^{r_2}} \leq A^+ \max_j \sqrt{\int_{U_j^{r_2}} |u_j|^2 d\lambda_j}, \label{plus}
\\
 A^- \max_j \sqrt{\int_{U_j^{r_1}} |u_j|^2 d\lambda_j} \leq \|u\|_{L^2, \, \U^{r_1}}. \label{minus}
\end{gather}
By using the mean value inequality, for each $\xi \in U_j^{r_2}$, one can estimate as follows: 
\begin{align*}
|u_j(\xi)|^2 &\leq \frac{1}{\mathrm{Vol}(\D(\xi, r_1 - r_2))^2}\int_{\D(\xi, r_1- r_2)}|u_j|^2d\lambda_j \\
&\leq  \frac{1}{(\pi(r_1 - r_2)^2)^2} \max_j \,{\int_{U_j^{r_1}} |u_j|^2 d\lambda_j},
\end{align*}
where $\D(\xi, r_1 - r_2)$ is the disk of radius $r_1 - r_2 >0$ centered at $\xi$  . 
Taking the integral on $U_j^{r_2}$, one can have
\begin{align*}
\int_{U_j^{r_2}} |u_j|^2 d\lambda_j &\leq  \frac{1}{(\pi(r_1 - r_2)^2)^2} \max_j \,{\int_{U_j^{r_1}} |u_j|^2 d\lambda_j} \int_{U_j^{r_2}}  d\lambda_j \\
&\leq \frac{1}{(\pi(r_1 - r_2)^2)^2} \max_j \,{\int_{U_j^{r_1}} |u_j|^2 d\lambda_j} \int_{U_j^1}  d\lambda_j. 
\end{align*}
By taking the square root and the maximum over all  $j$, one can see
\begin{equation}\label{V}
\max_j \sqrt{\int_{U_j^{r_2}} |u_j|^2 d\lambda_j} \leq \frac{V}{\pi(r_1 - r_2)^2} \max_j \sqrt{ \int_{U_j^{r_1}} |u_j|^2 d\lambda_j}
\end{equation}
by letting $V = \sqrt{\max_j \mathrm{Vol}(U_j^1)}$. 
By combining  inequalities (\ref{plus}), (\ref{minus}), and (\ref{V}), one can obtain
\[
\|u \|_{L^2, \, \U^{r_2}} \leq \frac{D}{(r_1 - r_2)^2} \|u \|_{L^2, \, \U^{r_1}}  \leq \frac{D \cdot K(E)}{(r_1- r_2)^2}\cdot \|f\|_{L^2, \, \U^{r_1}}
\]
by letting
\[
D = \frac{VA^+}{\pi A^-}. 
\]
We remark that the positive constant $D$ is independent of $E$, $r_1$, and $r_2$.

\end{proof}
\begin{corollary}\label{CorD}
For any unitary flat line bundle $E\in \Pic^0(C)\setminus\{\one\}$, there exists a positive constant $D_1$ which is independent of $E$ such that 
\[
D(E) \leq \frac{D_1}{d(\one, E)}.  
\]
holds, where d is the distance on $\Pic^0(C)$ induced from the Euclidian distance. 
\end{corollary}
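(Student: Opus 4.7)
The plan is to chain together the two previous results: Proposition \ref{correspondenceD}, which bounds the Donin constant in terms of the Kodaira--Spencer constant with a factor $(r_1-r_2)^{-2}$, and Corollary \ref{CorKS}, which in turn bounds the Kodaira--Spencer constant by $K_1/d(\one,E)$. Since everything has already been set up so that $\tau=2$ is the right exponent for unitary flat line bundles, no additional analytic work should be necessary.

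First I would fix $E\in\Pic^0(C)\setminus\{\one\}$ and a nested covering $\U^r$. For any $r_1,r_2$ with $r_\ast<r_2<r_1\leq1$ and any 1-coboundary $f\in\check{B}^1(\U^{r_1},E)$, Proposition \ref{correspondenceD} produces a cochain $u\in\check{C}^0(\U^{r_2},E)$ with $\delta u=f$ satisfying
\[
\|u\|_{L^2,\,\U^{r_2}}\ \leq\ \frac{D\cdot K(E)}{(r_1-r_2)^2}\,\|f\|_{L^2,\,\U^{r_1}},
\]
with the positive constant $D$ depending only on the initial geometric data (the partition of unity, the metric $g$, and the covering $\U$) and, in particular, independent of $E$, $r_1$ and $r_2$. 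By the definition of $D(E)$ as the infimum of admissible constants in Proposition \ref{LtwoD} with $\tau=2$, this already gives the estimate $D(E)\leq D\cdot K(E)$.

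Next I would invoke Corollary \ref{CorKS} to bound the Kodaira--Spencer constant: there exists a positive constant $K_1$, independent of $E$, such that
\[
K(E)\ \leq\ \frac{K_1}{d(\one,E)}.
\]
Substituting this into the previous inequality yields $D(E)\leq D\,K_1/d(\one,E)$, so setting $D_1\coloneqq D\cdot K_1$ completes the argument, with $D_1$ manifestly independent of $E$.

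There is no real obstacle here; the proof is essentially a two-line combination. The only minor point worth checking is that the constant $D$ appearing in Proposition \ref{correspondenceD} is genuinely uniform in $E\in\Pic^0(C)\setminus\{\one\}$, which follows from its explicit expression $D=VA^+/(\pi A^-)$ in terms of the fixed Hermitian metric $g$ and the covering, with no dependence on the flat fiber metric $\hflat$ (as already noted after Proposition \ref{LtwoD} via \cite[Lemma 2.4]{HK}).
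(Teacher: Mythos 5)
Your proposal is correct and follows exactly the paper's own argument: combine Proposition \ref{correspondenceD} (giving $D(E)\leq D\cdot K(E)$ with $D$ independent of $E$, $r_1$, $r_2$) with Corollary \ref{CorKS} (giving $K(E)\leq K_1/d(\one,E)$) and set $D_1=DK_1$. No gaps.
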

\begin{proof}
From Corollary \ref{CorKS} and Proposition \ref{correspondenceD}, one can obtain
\[
D(E) \leq D \cdot\frac{K_1}{d(\one, E)}.
\]
The positive constant $D_1 \coloneqq DK_1$ is independent of $E$.  

\end{proof}
\subsection{An estimate of $K(T_C\otimes E)$ by using perturbed $\delbar$-operators}
In this subsection, we always assume that $C$ is a compact Riemann surface whose genus is greater than 1 and $E$ is a unitary flat line bundle over $C$. 
Let $g$ be a Hermitian metric and $\hflat$ a flat fiber metric on $E$. 
What we want to investigate is $K(T_C \otimes E)$, where $T_C$ is the tangent bundle of $C$. 
We always suppose that a Hermitian fiber metric on $T_C \otimes E$ is induced from $g\otimes \hflat$. 
The goal of this subsection is proving the following proposition. 
\begin{proposition}\label{bdd}
Let $C$ be a compact Riemann surface whose genus is grater than 1. 
For $E\in \Pic^0(C)$, $K(T_C \otimes E)$ has the maximum. 
\end{proposition}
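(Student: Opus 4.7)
The plan is to combine Proposition \ref{correspondenceKS} with Hörmander-style uniform control on the whole compact family $\{T_C\otimes E\}_{E\in\Pic^0(C)}$, and then extract a maximum by an upper-semicontinuity argument on the compact torus $\Pic^0(C)$.

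First I would reduce to the Hörmander constant. Proposition \ref{correspondenceKS} gives $K(T_C\otimes E)\le \alpha+\beta\,\KHo(T_C\otimes E)$ with $\alpha,\beta$ independent of $E$, so it suffices to bound $\KHo(T_C\otimes E)$ uniformly in $E\in\Pic^0(C)$. The crucial input is the genus assumption: since $\genus(C)>1$, we have $\deg(T_C\otimes E)=2-2g<0$ for every $E\in\Pic^0(C)$, hence $H^0(C,T_C\otimes E)=0$ throughout $\Pic^0(C)$. Consequently the $\delbar$-operator on $T_C\otimes E$ has trivial kernel, so the twisted Hodge Laplacian $\square_E=\delbar_E^\ast\delbar_E$ is strictly positive and $\KHo(T_C\otimes E)$ is controlled by the reciprocal of the square root of its smallest eigenvalue $\lambda_1(E)$.

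Next I would realize $\{\delbar_E\}_{E\in\Pic^0(C)}$ as a continuous family of perturbed $\delbar$-operators on a \emph{fixed} $\mathcal{C}^\infty$ line bundle. Because $E$ is topologically trivial, a smooth identification $T_C\otimes E\simeq T_C$ as $\mathcal{C}^\infty$ bundles (depending continuously on $E$, using a smooth section of $\Pic^0(C)\to H^{0,1}(C)/H^1(C,\Z)$ and the flat fiber metric $\hflat$) transports $\delbar$ on $T_C\otimes E$ into $\delbar+A_E$ on $T_C$, where the $(0,1)$-form $A_E$ varies continuously in the $\mathcal{C}^\infty$-topology as $E$ ranges over $\Pic^0(C)$. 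Standard elliptic perturbation theory then makes $\lambda_1(E)$ a continuous positive function of $E$; compactness of the complex torus $\Pic^0(C)$ yields a strictly positive minimum $\lambda_0$, whence the uniform bound $\KHo(T_C\otimes E)\le 1/\sqrt{\lambda_0}$, and therefore $K(T_C\otimes E)\le \alpha+\beta/\sqrt{\lambda_0}$.

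Finally, to upgrade boundedness to a \emph{maximum}, I would show that $E\mapsto K(T_C\otimes E)$ is upper semi-continuous: given a near-optimal solution $u_0$ of $\delta u_0=f$ at $E_0$, transporting $u_0$ and $f$ via the smooth identification to nearby $E$ and solving the resulting perturbed $\delta$-equation produces a solution with an extra error bounded by $\|A_E-A_{E_0}\|$, which tends to $0$ as $E\to E_0$. U.s.c.\ on compact $\Pic^0(C)$ then gives an attained maximum. The main obstacle will be step two: constructing a single smooth identification $T_C\otimes E\simeq T_C$ that depends continuously on $E$ on all of $\Pic^0(C)$ (so that the \v{C}ech-level estimate defining $K(\,\cdot\,)$—not merely the analytic $\delbar$-estimate—varies continuously), and verifying that the passage from the Hörmander bound to the $\delta$-bound through Proposition \ref{correspondenceKS} preserves this continuous dependence on $E$.
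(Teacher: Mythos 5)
Your proposal is correct and follows essentially the same route as the paper: reduce to $\KHo(T_C\otimes E)$ via Proposition \ref{correspondenceKS}, use $\genus(C)>1$ to get $H^0(C,T_C\otimes E)=0$ so that $\KHo(T_C\otimes E)$ equals the (finite) operator bound for $\delbar$, transport the family to a fixed bundle by perturbed $\delbar$-operators, and conclude by compactness of $\Pic^0(C)$. The obstacle you flag (a global continuous identification $T_C\otimes E\simeq T_C$ over all of $\Pic^0(C)$) does not actually arise: the paper only needs upper semi-continuity, which it gets from sequential, local comparisons via the sections $\sigma_n$ of $E^{-1}\otimes E_n$ as in \cite{HK}, and this already yields the uniform bound that is used later.
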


Let $\kappa$ be a function $\kappa\colon \Pic^0(C) \to \mathbb{R}$ defined by
\[
\kappa(E) = \sup \left\{  \frac{\| u \|_{L^2}}{\| \delbar u \|_{L^2}}\  \middle| \ u \in \mathcal{A}^{0, 0}(C, T_C\otimes E)\setminus\{ 0\}\right\}. 
\]
Here, $\mathcal{A}^{p, q}(C, L)$ is the space of  $\mathcal{C}^\infty$-class $L$-valued $(p, q)$-forms on $C$. 
From the assumption as above, one has $H^0(C, T_C \otimes E) = 0$ because $T_C \otimes E$ is a negative line bundle over $C$. 
Then, since $H^0(C, T_C \otimes E) = 0$ implies the uniqueness of the solution of $\delbar$-equation on $\mathcal{A}^{0, 0}(C, T_C\otimes E)$, one can obtain $\kappa(E) = \KHo(T_C \otimes E)$. 
Therefore, $\kappa$ is  well-defined as a real-valued function on $\Pic^0(C)$ in settings of this subsection. 
\begin{proposition}\label{bounded}
Let $C$ be a compact Riemann surface whose genus is greater than $1$. 
Then, the function $\kappa\colon \Pic^0(C) \to \mathbb{R}$ defined as above is an upper semi-continuous function. 
In particular, $\kappa$ has the maximum. 
\end{proposition}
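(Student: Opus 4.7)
The strategy is to realize, for $E$ near any fixed $E_0 \in \Pic^0(C)$, the operator $\delbar_E$ on $T_C\otimes E$ as a bounded zeroth-order perturbation of $\delbar_{E_0}$ on $T_C \otimes E_0$, then invoke the standard fact that the operator-norm of the inverse of a closed densely-defined operator with trivial kernel and closed range is upper semi-continuous under bounded perturbations. Since the assumption $g > 1$ makes $\deg(T_C \otimes E) = 2 - 2g < 0$, so that $H^0(C, T_C \otimes E) = 0$ for every $E \in \Pic^0(C)$, the function $\kappa$ is everywhere finite; upper semi-continuity on the compact complex torus $\Pic^0(C)$ then forces the attainment of a finite maximum.

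Concretely, I parametrize a small neighborhood of $E_0$ via the Dolbeault/exponential-sequence isomorphism $\Pic^0(C) \cong H^{0,1}(C)/H^1(C, \Z)$: for each small harmonic $(0,1)$-form $\beta$ I set $E_\beta := E_0 \otimes F_\beta$, where $F_\beta$ is the unitary flat line bundle representing the class of $\beta$, so that $\|\beta\|_\infty \to 0$ as $E_\beta \to E_0$. Lifting $\beta$ to the universal cover $\widetilde C$ as $\delbar \psi_\beta$ for a smooth $\psi_\beta$ whose deck-transformation periods reproduce the monodromy character of $F_\beta$, the exponential $s_\beta := e^{\psi_\beta}$ descends to a smooth trivializing section of $F_\beta$ with $|s_\beta|_{\hflat} = 1 + O(\|\beta\|_\infty)$ and $\delbar s_\beta = \beta\otimes s_\beta$. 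Tensoring with $s_\beta$ produces a $\mathcal{C}^\infty$ bundle isomorphism $\Phi_\beta\colon T_C \otimes E_\beta \to T_C \otimes E_0$ that distorts the $L^2$-norm by a factor $1 + O(\|\beta\|_\infty)$ and satisfies the intertwining identity
\[
\Phi_\beta \circ \delbar_{E_\beta} = (\delbar_{E_0} + \beta \wedge \,\cdot\,) \circ \Phi_\beta .
\]

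Viewing $T := \delbar_{E_0}$ as a closed densely-defined operator on $L^2$-sections of $T_C \otimes E_0$ with trivial kernel, closed range, and $\|T^{-1}|_{\operatorname{ran} T}\| = \kappa(E_0)$, and $B_\beta := \beta \wedge \,\cdot\,$ as a bounded multiplication operator with $\|B_\beta\| \leq \|\beta\|_\infty$, the classical Neumann-series estimate yields
\[
\|(T + B_\beta)^{-1}\| \leq \frac{\kappa(E_0)}{1 - \kappa(E_0)\,\|\beta\|_\infty}
\]
whenever $\kappa(E_0)\,\|\beta\|_\infty < 1$. Conjugating back through $\Phi_\beta$ and absorbing its $1 + O(\|\beta\|_\infty)$ $L^2$-distortion on source and target gives $\limsup_{E \to E_0} \kappa(E) \leq \kappa(E_0)$, which is upper semi-continuity at $E_0$, and the maximum then follows from compactness of $\Pic^0(C)$. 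The main technical obstacle is the construction of the family $\{s_\beta\}$ satisfying the intertwining identity with a genuinely small perturbation term; once this Hodge-theoretic step is in place, the rest of the argument is routine functional analysis.
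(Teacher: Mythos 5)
Your argument is essentially the paper's: the paper also realizes $\delbar$ on $T_C\otimes E_n$ as a perturbed operator $D_n = \delbar + (\delbar\sigma_n/\sigma_n)\wedge\cdot$ on the fixed bundle $T_C\otimes E$ via a nowhere-vanishing, norm-one section $\sigma_n$ of $E^{-1}\otimes E_n$ (citing \cite{HK} for the construction you sketch Hodge-theoretically), and then derives $\limsup\kappa(E_n)\leq\kappa(E)$ from the smallness of the perturbation. The only cosmetic difference is that the paper runs the final estimate as a reverse-triangle-inequality contradiction rather than your Neumann-series bound, which amounts to the same inequality $\kappa(E_\beta)\leq\kappa(E_0)/(1-\kappa(E_0)\|\beta\|_\infty)$.
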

\begin{proof}
Because $\Pic^0(C)$ is compact, if $\kappa$ is an upper semi-continuous function, then $\kappa$ has the maximum. 

In this paper, we introduce a proof by using the {\em perturbed $\delbar$-operator} which is provided in  \cite{HK}. 
In more generally, we can see the continuity of $\kappa$ (see \cite[\S7]{K}). 

For $E\in\Pic^0(C)$, let $\{E_n\}$ be a sequence which converges to $E$. 
Then, for each $n$, there exists a nowhere vanishing $\mathcal{C}^\infty$-class section $\sigma_n$ from $C$ to $E^{-1} \otimes E_n$ which satisfies the following conditions: 
\begin{itemize}
\item For each $p\in C$, $|\sigma_n(p)|_{\hflat} = 1$ holds,  
\vspace{1mm}
\item $\displaystyle{\lim_{n \to \infty}\left\|\frac{\delbar \sigma_n}{\sigma_n} \right\|_{L^2} = 0}$, 
\end{itemize}
where, $\hflat$ is a flat fiber metric on $E^{-1} \otimes E_n$. 
About the construction of $\sigma_n$, see \cite[\S 2.6]{HK}. 
By using each $\sigma_n$, we can define an operator $D_n\colon \mathcal{A}^{0, 0}(C, T_C \otimes E)  \to  \mathcal{A}^{0, 1}(C, T_C \otimes E)$ which is called the {\em perturbed $\delbar$-operator} by
\[
D_n u' \coloneqq \frac{1}{\sigma_n} \otimes  \delbar ( u' \otimes \sigma_n )  = \delbar u' + \frac{\delbar \sigma_n}{\sigma_n} \wedge u'. 
\]
We can compute $\kappa(E_n)$ by using $D_n$ as follows:   
\begin{align}
\kappa(E_n) \nonumber&= \sup \left\{  \frac{\| u \|_{L^2}}{\| \delbar u \|_{L^2}}\  \middle|\  u \in \mathcal{A}^{0, 0}(C, T_C\otimes E_n) \backslash\{ 0\}\right\} \\
\nonumber&= \sup \left\{  \frac{\| \sigma_n u' \|_{L^2}}{\| \delbar (\sigma_n u') \|_{L^2}}\  \middle|\  u' \in \mathcal{A}^{0, 0}(C, T_C\otimes E) \backslash\{ 0\}\right\} \\
&= \sup \left\{  \frac{\| u' \|_{L^2}}{\| D_n u' \|_{L^2}}\  \middle|\  u' \in \mathcal{A}^{0, 0}(C, T_C\otimes E) \backslash\{ 0\}\right\}. \label{kappa}
\end{align}
We will show that $\kappa$ is an upper semi-continuous function on $\Pic^0(C)$. 
We assume that $\kappa$ is not an upper semi-continuous function and we will lead to a contradiction. 
We suppose that there exists a positive constant $\ve$ such that there exists a sequence $\{E_n\}$ which converges to $E$ and 
\[
\limsup_{ n \to \infty } \kappa(E_n) > \kappa(E) + \ve
\]
holds. 
Hence, we can obtain a subsequence $\{E_{n_j}\}$ of $\{E_n\}$ which satisfies
\[
\kappa(E_{n_j}) > \kappa(E) + \frac{\ve}{2}
\]
for each $j$. 
Therefore, from the equation (\ref{kappa}), we can obtain $u_j \in \mathcal{A}^{0, 0}(C, T_C\otimes E)\backslash \{0\}$ such that
\[
\frac{\| u_j \|_{L^2}}{\| D_{n_j} u_j \|_{L^2}} \geq \kappa(E) + \frac{\ve}{2}
\]
holds for each $j$. 
Since we may assume $\|u_j\|_{L^2} = 1$, we can compute as following: 
\begin{align*}
1 &\geq \left( \kappa(E) + \frac{\ve}{2} \right)  \| D_{n_j} u_j\|_{L^2} \\
&= \left( \kappa(E) + \frac{\ve}{2} \right) \left\| \delbar  u_j + \frac{\delbar \sigma_{n_j}}{\sigma_{n_j}} \wedge u_j \right\|_{L^2} \\
&\geq \left( \kappa(E) + \frac{\ve}{2} \right) \left| \|\delbar u_j\|_{L^2} - \left\| \frac{\delbar\sigma_{n_j}}{\sigma_{n_j}} \right\|_{L^2} \cdot \| u_j\|_{L^2} \right| \\
&\geq \left( \kappa(E) + \frac{\ve}{2} \right)  \left| \frac{1}{\kappa(E)} - \left\| \frac{\delbar\sigma_{n_j}}{\sigma_{n_j}} \right\|_{L^2}\right| \\
&= \left( 1 + \frac{\ve}{2\kappa(E)} \right)  \left| 1 - \kappa(E) \left\| \frac{\delbar\sigma_{n_j}}{\sigma_{n_j}} \right\|_{L^2}\right|. 
\end{align*}
Therefore, we can lead to a contradiction in $j$ when $\left\| \frac{\delbar\sigma_{n_j}}{\sigma_{n_j}} \right\|_{L^2}$ is sufficiently small. 
\end{proof}
This approach works well when we replace $T_C$ with a negative holomorphic line bundle over $C$.

\begin{proof}[\sc Proof of proposition \ref{bdd}]
From Proposition \ref{CorKS} and Proposition \ref{bounded}, one has
\[
 K(T_C \otimes E) \leq \alpha + \beta \cdot \left(\max_{E' \in \Pic^0(C)}\KHo(T_C \otimes E') \right)< \infty
\]
with positive constants $\alpha$ and $\beta$ which are independent of  $E$. 
\end{proof}

\subsection{Proof of Theorem \ref{main}}
For proving Theorem \ref{main}, it is enough to show the following proposition. 
\begin{proposition}
Let $C$ be a compact Riemann surface holomorphically embedded in a non-singular complex surface $M$. 
Denote by $\NCM$ the normal bundle of $C$ which satisfies $\NCM \in \Picntzero(C)$. 
If $\NCM$ satisfies the Brjuno condition in the sense of Definition \ref{Brjuno}, then
\[
\sum_{k \geq 1} \frac{\log D_\ast(2^{k+1})}{2^k} < \infty, 
\]
where $D_\ast(2^{k+1})$ is the number defined in \S\ref{GSresult}.
\end{proposition}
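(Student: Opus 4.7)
The plan is to reduce $D_\ast(2^{k+1})$ to an expression of the form $1+C_0\,\omega_{k+1}(\NCM)$ with $C_0$ independent of $k$, and then extract the required summability directly from the Brjuno condition.

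The two ingredients have already been established in this section. First, Proposition \ref{bdd} provides a constant $M_0$ with $K(T_C\otimes E)\le M_0$ for every $E\in\Pic^0(C)$; in particular $K(T_C\otimes \NCM^{-\ell+1})\le M_0$ for every $\ell\ge 2$. Second, since $\NCM\in\Picntzero(C)$ implies $\NCM^{-\ell+1}\in\Pic^0(C)\setminus\{\one\}$ for every such $\ell$, Corollary \ref{CorD} yields $D(\NCM^{-\ell+1})\le D_1/d(\one, \NCM^{-\ell+1})$ with $D_1$ independent of $\ell$. Multiplying the two and taking the maximum over $2\le\ell\le 2^{k+1}$,
\[
D_\ast(2^{k+1}) \le 1 + C_0\, \omega_{k+1}(\NCM), \qquad C_0 := (1+cM_0)D_1.
\]

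It remains to pass to the logarithm. Combining the elementary inequalities $1+C_0 x \le (1+C_0)(1+x)$ for $x\ge 0$ and $\log(1+x)\le \log 2 + (\log x)^+$ for $x>0$ with the uniform lower bound $\omega_{k+1}(\NCM)\ge 1/\mathrm{diam}(\Pic^0(C))$ supplied by the compactness of $\Pic^0(C)$, one obtains a constant $A$ independent of $k$ such that
\[
\log D_\ast(2^{k+1}) \le A + \log\omega_{k+1}(\NCM).
\]
Summing with weights $2^{-k}$, the first term contributes $A\cdot\sum_{k\ge 1} 2^{-k} = A$ and the second is finite by the Brjuno condition, completing the argument.

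All substantive content sits in Proposition \ref{bdd}: without its uniform estimate on $K(T_C\otimes\NCM^{-\ell+1})$, one would instead have to dominate that factor by Corollary \ref{CorKS}, ending up with a coefficient $2\log\omega_{k+1}$ in place of $\log\omega_{k+1}$ — still summable under the Brjuno hypothesis, but slightly wasteful. The only mildly delicate bookkeeping is the treatment of potentially negative values of $\log\omega_{k+1}$, which is dispatched by the diameter estimate on $\Pic^0(C)$ noted above.
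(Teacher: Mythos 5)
There is a genuine gap: you invoke Proposition \ref{bdd} as giving a uniform bound $K(T_C\otimes E)\le M_0$ ``for every $E\in\Pic^0(C)$'', but that proposition is stated (and proved) only for $\genus(C)>1$. Its proof hinges on $T_C\otimes E$ being a negative line bundle, so that $H^0(C,T_C\otimes E)=0$ for \emph{all} $E\in\Pic^0(C)$ and $\kappa=\KHo(T_C\otimes\cdot)$ is a finite upper semi-continuous function on the compact space $\Pic^0(C)$. When $\genus(C)=1$ the tangent bundle is holomorphically trivial, $T_C\otimes\one\cong\mathcal{O}_C$ has nontrivial sections, $\kappa$ blows up near $\one$, and no uniform bound on $K(T_C\otimes\NCM^{-\ell+1})$ is available (nor should one be expected, since $\NCM^{-\ell+1}$ may approach $\one$ as $\ell$ grows). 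So your single estimate $D_\ast(2^{k+1})\le 1+C_0\,\omega_{k+1}$ is unjustified in the elliptic case, which is precisely the case of Corollary \ref{main_cor}.

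The repair is the case split the paper performs. For $\genus(C)=1$, use $T_C\cong\mathcal{O}_C$ to identify $K(T_C\otimes\NCM^{-\ell+1})$ with $K(\NCM^{-\ell+1})$ and bound it by $K_1/d(\one,\NCM^{-\ell+1})\le K_1\,\omega_{k+1}$ via Corollary \ref{CorKS}; this yields $D_\ast(2^{k+1})\le 1+(1+cK_1\omega_{k+1})D_1\omega_{k+1}$ and hence $\log D_\ast(2^{k+1})\le c_1+2\log\omega_{k+1}$, still summable against $2^{-k}$ under the Brjuno condition. For $\genus(C)>1$, Proposition \ref{bdd} applies and gives your $1+C_0\,\omega_{k+1}$ bound. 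Note that your closing remark points in the right direction but misstates the logic twice over: the Corollary \ref{CorKS} route is not an optional, slightly wasteful alternative but the \emph{only} available one for genus $1$, and conversely it is unavailable for genus $>1$ because $T_C\otimes\NCM^{-\ell+1}$ is then not unitary flat, so the two tools live in disjoint regimes. Your handling of the logarithm (the lower bound $\omega_{k+1}\ge 1/\mathrm{diam}(\Pic^0(C))$ to control possibly negative logarithms) is correct and in fact slightly more careful than the paper's.
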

\begin{proof}
Denote by $\genus(C)$ the genus of $C$. 
When $\genus(C) = 1$, then the tangent bundle $T_C$ is holomorphically trivial. 
From Corollary \ref{CorKS} and Corollary \ref{CorD},  there exist positive constants $K_1$ and $D_1$ such that
$K( \NCM^{-\ell + 1}) \leq  {K_1}/{d(\one, \NCM^{-\ell + 1})}$ and $D( \NCM^{-\ell + 1}) \leq  {D_1}/{d(\one, \NCM^{-\ell + 1})}$ hold, 
where, $K_1$ and $D_1$ are independent of $\NCM$. 
Letting 
\[
\omega_{k+1} = \max_{2 \leq  \ell \leq 2^{k+1}} \frac{1}{d(\one, \NCM^{-\ell + 1})}, 
\]
then $D_\ast(2^{k+1})$ is bounded from above by $1 + (1 + cK\omega_{k+1})D\omega_{k+1}$. 
Therefore, there exist constants $c_1$ and $c_2$ such that $\log D_\ast(2^{k+1}) \leq c_1 + c_2\log \omega_{k+1}$ holds. 

When  $\genus(C) > 1$, from Corollary \ref{CorD} and Proposition \ref{bdd}, there exist positive constants $\widehat{K}$ and $D_1'$
such that $K( T_C \otimes \NCM^{-\ell + 1}) \leq  \widehat{K}$ and $D( \NCM^{-\ell + 1}) \leq  {D_1'}/{d(\one, \NCM^{-\ell + 1})}$ hold, 
where $\widehat{K}$ and $D_1'$ are independent of $\NCM$. 
Therefore, $D_\ast(2^{k+1}) \leq 1 + (1 + c\widehat{K}) D_1' \omega_{k+1}$ holds. 
In this case, we can also obtain a positive constant $c_3$ such that $\log D_\ast(2^{k+1}) \leq c_3 + \log \omega_{k+1}$ holds.  
Consequently, if $\NCM$ satisfies the Brjuno condition, then, 
\[
\sum_{k \geq 1} \frac{\log D_\ast(2^{k+1})}{2^k} < \infty. 
\]
\end{proof}

\subsection{Ueda's lemma and Donin type estimate for the $L^\infty$-norm}\label{Linfinity}
In this subsection, as same as above, let $C$ be a compact Riemann surface and $(E, h)$ be a unitary flat line bundle over $C$ with a flat fiber metric, and fix $\U^r \ (r\in[r_\ast, 1])$ as a nested covering of $C$. 
Our goal of this subsection is introducing the Kodaira--Spencer type estimate and obtaining the Donin type estimate for the norms induced from $L^\infty$-norm on $\check{C}^0(C, E)$ and $\check{C}^1(C, E)$ . 
In particular, the former statement is known as Ueda's lemma. 

For a Hermitian line bundle $(L, h)$ over $C$, we define the norms on $\check{C}^0(C, L)$ and $\check{C}^1(C, L)$ induced from $L^\infty$-norms. 
For a \v{C}ech $0$-cochain $u = \{(U_j^r, u_j)\} \in \check{C}^0(C, L)$, we define the norm of $u$ by
\[
\|u\|_{L^\infty, \,\U^r} \coloneqq \max_{j} \|u_j\|_{L^\infty, U_j^r} = \max_j \sup_{U_j^r} |u_j|_h
\]
and  for \v{C}ech 1-cochain $f = \{(U_{jk}^r, f_{jk})\} \in \check{C}^1(C, L)$, we define the norm of $f$ by
\[
\|f\|_{L^\infty, \,\U^r} \coloneqq \max_{j, k} \|f_{jk}\|_{L^\infty, U_j^r} = \max_{j, k} \sup_{U_{jk}^r} |f_{jk}|_h. 
\]

\begin{proposition}[{\cite[Lemma 4]{U}}]\label{Ueda}
Let $C$ be a compact Riemann surface with a unitary flat line bundle $(E, \hflat)$ and  $\U^r = \{U_j^r\} (r \in [r_\ast, 1])$ a nested finite covering of $C$. 
There exists a positive constant $K_\infty$ which is independent of $E$ and all $r \in[r_\ast, 1]$, and satisfies the following:
for any $f\in\check{B}^1(\mathcal{U}^{r}, E)$, there exists $u \in \check{C}^0(\mathcal{U}^{r}, E)$ which satisfies $\delta u  = f$, and  
\[
\|u\|_{L^\infty, \,\U^r}  \leq \frac{K_\infty}{d(\one, E)} \|f\|_{L^\infty, \, \U^{r}}. 
\]
\end{proposition}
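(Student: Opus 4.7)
The plan is to run the Čech–Dolbeault bridge of Proposition \ref{correspondenceKS} with $L^\infty$-norms in place of $L^2$-norms, invoke Proposition \ref{HK} for the global $\delbar$-equation, and then upgrade its output from $L^2$ to $L^\infty$ by a local Cauchy–Pompeiu representation in a flat trivialization of $E$.

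\emph{Construction of the solution.} Fix once and for all a partition of unity $\{\rho_j\}$ subordinate to $\U^{r_\ast}$, so that it is subordinate to every nested covering $\U^r$ with $r\in[r_\ast,1]$. For a given $1$-coboundary $f=\{(U_{jk}^r,f_{jk})\}\in\check{B}^1(\U^r,E)$, set $F_j:=\sum_{\ell\neq j}\rho_\ell f_{j\ell}$ on $U_j^r$. Since $F_k-F_j=f_{jk}$ on overlaps, the local $(0,1)$-forms $\delbar F_j$ patch to a global smooth $E$-valued $(0,1)$-form $F$ on $C$, and holomorphy of each $f_{jk}$ gives $\|F\|_{L^\infty}\le C_1\mathcal{N}\,\|f\|_{L^\infty,\U^r}$ with $C_1:=\max_j\sup|\delbar\rho_j|_g$ and $\mathcal{N}$ the cardinality of $\U^1$. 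Since $f$ is a Čech coboundary, the Dolbeault class $[F]\in H^{0,1}(C,E)$ vanishes, and Proposition \ref{HK} produces a smooth section $\eta$ of $E$ with $\delbar\eta=F$ and $\|\eta\|_{L^2}\le K_0/d(\one,E)\cdot\|F\|_{L^2}$. Putting $u_j:=F_j-\eta$ on $U_j^r$ yields $u\in\check{C}^0(\U^r,E)$ with $\delta u=f$.

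\emph{Upgrade from $L^2$ to $L^\infty$.} The main technical point is to show $\|\eta\|_{L^\infty}\le K_0''/d(\one,E)\cdot\|F\|_{L^\infty}$ with $K_0''$ independent of $E$ and $r$. Trivialize $E$ on each chart $U_j^1$ by a flat frame, so that in that frame $\delbar\eta_j=F_j^\sharp$ becomes a scalar Cauchy--Riemann equation. Writing $\eta_j=P_j+h_j$, where $P_j$ is the Cauchy--Pompeiu potential for $F_j^\sharp$ on a fixed disk slightly enlarging $\vp_j(U_j^1)$ and $h_j$ is holomorphic, one bounds $\|P_j\|_{L^\infty}$ by a universal multiple of $\|F\|_{L^\infty}$ and controls $\|h_j\|_{L^\infty,U_j^1}$ by its $L^2$-norm via the mean value inequality (as in inequality (\ref{V}) in the proof of Proposition \ref{correspondenceD}), which is in turn dominated by $\|\eta\|_{L^2}+\|P_j\|_{L^2}$. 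Combining with the $L^2$ bound from the previous paragraph and the uniform inequality $d(\one,E)\le \mathrm{diam}(\Pic^0(C))$, one obtains
\[
\|u\|_{L^\infty,\U^r}\;\le\;\mathcal{N}\,\|f\|_{L^\infty,\U^r}+\|\eta\|_{L^\infty}\;\le\;\frac{K_\infty}{d(\one,E)}\,\|f\|_{L^\infty,\U^r}.
\]

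\emph{Main obstacle.} The delicate part is the $L^2$-to-$L^\infty$ upgrade while preserving the sharp $1/d(\one,E)$ scaling coming from Proposition \ref{HK}: if the flat trivialization of $E$ is not used in the local Cauchy--Pompeiu representation, the holonomy of $E$ enters the local constants and the uniformity in $E\in\Pic^0(C)$ is lost. A minor secondary issue is that for $r=1$ one needs the charts $\vp_j$ to extend slightly beyond $U_j^1$ so that the mean value inequality applies up to the boundary, which is a harmless consequence of the initial assumption that $\U^1$ is sufficiently fine.
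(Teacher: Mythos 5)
The paper does not prove this proposition at all: it is quoted verbatim as Ueda's lemma \cite[Lemma 4]{U}, so there is no in-paper argument to compare against. Your proposal supplies an actual proof, and it is essentially the Hashimoto--Koike route (the approach of \cite{HK}, whose very purpose is to rederive Ueda's lemma from a uniform H\"ormander estimate): you rerun the \v{C}ech--Dolbeault bridge of Proposition \ref{correspondenceKS} with $L^\infty$-norms, feed $F$ into Proposition \ref{HK}, and upgrade $\|\eta\|_{L^2}$ to $\|\eta\|_{L^\infty}$ by writing $\eta$ locally as a Cauchy--Pompeiu potential plus a holomorphic remainder. This is consistent with the paper's toolkit and is more self-contained than the citation; Ueda's original argument is of a different, more combinatorial nature. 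The key points are all correctly identified: $[F]=0$ because $f$ is a coboundary, the flat frames on the simply connected charts make the local scalar representatives have modulus equal to $|\cdot|_{\hflat}$ so no holonomy pollutes the constants, the Cauchy--Pompeiu kernel gives a universal $L^\infty$ bound on the potential, and $d(\one,E)\le\mathrm{diam}(\Pic^0(C))$ absorbs the additive term.

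One point needs to be repaired rather than waved at. In the paper's setup $\vp_j(U_j^1)=\D$ exactly, so the charts do \emph{not} extend beyond $U_j^1$, and for $r=1$ you cannot bound $\sup_{U_j^1}|h_j|$ by $\|h_j\|_{L^2(U_j^1)}$ (this fails already for unbounded holomorphic functions in the Bergman space of $\D$). The fix is standard but should be stated: either shrink the atlas once and for all so that each $\vp_j$ extends to a neighborhood of $\overline{\D}$, or, better, estimate $|\eta(p)|_{\hflat}$ pointwise by choosing $k$ with $p\in U_k^{r_\ast}$ and performing the decomposition $\eta=P_k+h_k$ in the $k$-th chart, where the disk $\D(\vp_k(p),1-r_\ast)$ sits inside $\vp_k(U_k^1)$ and the mean value inequality (as in inequality (\ref{V})) applies with a constant depending only on $r_\ast$. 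Since $\sup_C|\eta|_{\hflat}$ is what controls every $\|u_j\|_{L^\infty,U_j^r}$ simultaneously, this pointwise version suffices for all $r\in[r_\ast,1]$ at once. You should also record the (uniform, chart-dependent) comparison between $dV_g$ and $d\lambda_j$ and between $|F|_{g,\hflat}$ and the scalar $|F^\sharp|$, exactly as the constants $A^\pm$ do in the proof of Proposition \ref{correspondenceD}. With these two routine additions the argument is complete.
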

We can obtain the Donin type estimate from Ueda's lemma by similar way to the proof of Proposition \ref{correspondenceD}. 
\begin{proposition}\label{UedaD}
Let $C$ be a compact Riemann surface with a flat line bundle $(E, \hflat)$ and  $\U^r = \{U_j^r\} (r \in [r_\ast, 1])$ a nested finite covering of $C$. 
There exists a positive constant $D_\infty$ which is independent of $E$ and all $r \in[r_\ast, 1]$ and satisfies the following:
for any $f\in\check{B}^1(\mathcal{U}^{r_1}, E)$, there exists $v \in \check{C}^0(\mathcal{U}^{r_2}, E)$ which satisfies $\delta v  = f$, and  
\[
\|v\|_{L^\infty, \,\U^{r_2}}  \leq \frac{D_\infty}{(r_1- r_2)^2 \,d(\one, E)} \|f\|_{L^\infty, \, \U^{r_1}}. 
\]
Here $r_1$ and $r_2$ are any numbers satisfying $r_\ast < r_2 < r_1 \leq  1$. 
\end{proposition}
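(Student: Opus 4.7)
The plan is to mirror the proof of Proposition \ref{correspondenceD}, invoking Ueda's lemma (Proposition \ref{Ueda}) in place of the Kodaira--Spencer type estimate, and transferring the $L^\infty$-bound from $\U^{r_1}$ to $\U^{r_2}$ via a mean value inequality in the coordinate charts $(U_j, \vp_j)$ so as to produce the factor $(r_1-r_2)^{-2}$.

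Concretely, I would first apply Proposition \ref{Ueda} to the given coboundary $f \in \check{B}^1(\U^{r_1}, E)$ to obtain $u = \{(U_j^{r_1}, u_j)\} \in \check{C}^0(\U^{r_1}, E)$ satisfying $\delta u = f$ and
\[
\|u\|_{L^\infty, \U^{r_1}} \leq \frac{K_\infty}{d(\one, E)}\, \|f\|_{L^\infty, \U^{r_1}},
\]
with $K_\infty$ independent of $E$ and $r$. Setting $v_j := u_j|_{U_j^{r_2}}$, the cochain $v \in \check{C}^0(\U^{r_2}, E)$ automatically satisfies $\delta v = f$. Next, I would bound each $v_j$ pointwise: for $\xi \in U_j^{r_2}$, the Euclidean disk of radius $r_1-r_2$ about $\vp_j(\xi)$ lies inside $\vp_j(U_j^{r_1}) = \D_{r_1}$, and the holomorphic mean value identity applied to $u_j$, combined with the trivial bound $\int|u_j|\,d\lambda_j \leq \mathrm{Vol}(U_j^1)\cdot \|u_j\|_{L^\infty, U_j^{r_1}}$, gives
\[
|u_j(\xi)| \leq \frac{\mathrm{Vol}(U_j^1)}{\pi(r_1-r_2)^2}\, \|u_j\|_{L^\infty, U_j^{r_1}}.
\]
Exactly as in Proposition \ref{correspondenceD}, the ratios between $|\cdot|_{\hflat}$ and pointwise modulus, and between $dV_g$ and $d\lambda_j$, are controlled by uniform positive constants $A^\pm$ depending only on $g$ and the charts. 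Taking the maximum over $j$ and combining with the Ueda bound yields the desired inequality with $D_\infty$ equal to a product of $K_\infty$ with a geometric constant determined solely by $\U^1$ and $g$.

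I do not expect a serious technical obstacle: the mean value step is routine, and the work is mostly bookkeeping to verify that $D_\infty$ depends neither on the unitary flat bundle $E$ nor on the radii $r_1, r_2 \in [r_\ast, 1]$. The first follows from the corresponding property of $K_\infty$ in Ueda's lemma together with the observation (already exploited in the $L^2$ case) that the flat fiber metric $\hflat$ enters pointwise estimates only through overall positive constants. The second is immediate from the uniform boundedness of $\mathrm{Vol}(U_j^1)$ and of $A^\pm$ over the fixed finite cover $\U^1$. As a sanity check, the trivial inclusion $U_j^{r_2} \subset U_j^{r_1}$ already gives $\|v\|_{L^\infty, \U^{r_2}} \leq \|u\|_{L^\infty, \U^{r_1}}$, so the Donin form can in fact be recovered without the mean value step simply by absorbing $(r_1-r_2)^2 \leq 1$ into the constant; the mean value argument is used only to keep the proof structure strictly parallel to that of Proposition \ref{correspondenceD}.
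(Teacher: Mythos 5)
Your proposal is correct and follows essentially the same route as the paper: apply Ueda's lemma (Proposition \ref{Ueda}) on $\U^{r_1}$, restrict the resulting $0$-cochain to $\U^{r_2}$, and use the sub-mean value inequality on the disks $\D(\xi, r_1-r_2)\subset U_j^{r_1}$ to produce the factor $(r_1-r_2)^{-2}$, with $D_\infty$ built from $K_\infty$ and $\max_j \mathrm{Vol}(U_j^1)$. Your closing observation that the sup-norm already decreases under restriction, so the Donin form follows trivially by absorbing $(r_1-r_2)^2\leq 1$, is also valid, though the paper keeps the mean value argument to match the structure of Proposition \ref{correspondenceD}.
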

\begin{proof}
From Proposition \ref{Ueda}, we can obtain a positive constant $K_\infty$ such that, for a given \v{C}ech 1-coboundary $f\in\check{B}^1(\mathcal{U}^{r_1}, E)$, 
there exists a \v{C}ech 0-cochain $u = (U_j^{r_1}, u_j)\in \check{C}^0(\mathcal{U}^{r_1}, E)$ such that $\delta u = f$ and 
\begin{equation}\label{ueda}
\|u\|_{L^\infty, \,\U^r}  \leq \frac{K_\infty}{d(\one, E)} \|f\|_{L^\infty, \, \U^{r_1}} 
\end{equation}
hold. 
Let $v \in \check{C}^0(\mathcal{U}^{r_1}, E)$ be a \v{C}ech 0-cochain $\{(U_j^{r_2}, v_j)\}$ obtained by the restriction of $u_j$ on $U_j^{r_2}$ for each $j$. 
We can see the following inequality by using the mean value inequality for each $j$ and $\xi \in U_j^{r_2}$: 
\begin{equation}\label{infinityDonin1}
|v_j(\xi)| \leq \frac{1}{\mathrm{Vol}(\D(\xi, r_1 - r_2))} \ \int_{\D(\xi, r_1 - r_2)} |v_j| d\lambda_j. 
\end{equation}
Since $\D(\xi, r_1 - r_2) \subset U_j^{r_1} \subset U_j^1$ holds, one has
\[
\int_{\D(\xi, r_1 - r_2)} |v_j| d\lambda_j \leq \sup_{U_j^{r_1}} |v_j|_{\hflat} \int_{U_j^{r_1}}d\lambda_j \leq \sup_{U_j^{r_1}} |v_j|_{\hflat} \cdot  \mathrm{Vol}(U_j)
\]
By taking the supremum on $U_j^{r_2}$ to the inequality (\ref{infinityDonin1}) and the maximum over all $j$, and combining the inequality (\ref{ueda}), one has
\[
\|v\|_{L^\infty, \,\U^{r_2}}  \leq \frac{K_\infty\max_j \mathrm{Vol}(U_j^1)}{\pi(r_1- r_2)^2 \,d(\one, E)} \ \|f\|_{L^\infty, \, \U^{r_1}}. 
\]
Finally, we remark that a constant $D_\infty \coloneqq \pi^{-1}K_\infty\max_j \mathrm{Vol}(U_j^1)$ is independent of  $E$, $r_1$, and $r_2$. 

\end{proof}


 

\begin{thebibliography}{99}
\bibitem[A]{A}{V. I. Arnol'd}, {Bifurcation of invariant manifolds of differential equations and normal forms in neighborhoods of elliptic curves}, {Funkcional Anal. i Prilozen.}, {10-4 (1976)}, {1--12}. 
\bibitem[B]{B}{A. D. Brjuno}, {Analytical form of differential equations}, {Trans. Moscow Math. Soc. } {\bf 25} {(1971)}, {131-288; }{\bf 26}{(1972)}, {219-229}. 
\bibitem[CG]{CG}
{L. Carleson and T. W. Gamelin}, {Complex Dynamics}, {Universitext: Tracts in Mathematics Springer--Verlag}{ (1993)}. 
\bibitem[D]{D}{I. F. Donin}, {Cohomology with estimates for coherent analytic sheaves over complex spaces}, {Mat. Sb. (N.S.) } {\bf 86}{(128)} {1971}, {339--366}. {MR0299829}. 
\bibitem[G]{G}{H. Grauert}, {\"{U}ber modifikationen und exzeptionelle analytische mengen}, {Math. Ann.}, 
{\bf{146}}, {331--368 (1962)}. 
\bibitem[GL]{GL}{F. Golse and P. Lochak}, {Sur l'approximation rationelle simultan\'{e}e des vecteurs de Bruno}, {C.R.Acad. Sci. Paris}, {t.324, S\'{e}rie} {\bf 1}, {p. 1047--1052, (1997)}. 
\bibitem[GR]{GR}{H. Grauert and R. Remmert}, {Theory of Stein spaces}, {Classics in Mathematics. Springer, Berlin (2004)}. 
\bibitem[GS]{GS}{X. Gong and L. Stolovitch}, {Equivalence of neighborhoods of embedded compact complex manifolds and higher codimension foloiations}, {Arnold Math J.}, {(8):61--145, (2022). }
\bibitem[H\"{o}]{Ho}{L. H\"{o}rmander}, {$L^2$ estimates and existence theorems for $\delbar$ operator}, {Acta Math. } {\bf 113} { (1965), 89--152. MR179443. } 
\bibitem[Hw]{H}{J.-M, Hwang}, {An application of Cartan's equivalence method to Hirschowitz's conjecture on the formal principle}. {Ann. Math}. {\bf 189}{(3), 945--978(2019)}. 
\bibitem[HK]{HK}{Y. Hashimoto and T. Koike}, {Ueda's lamma via uniform H\"{o}rmander estimates for flat line bundles}, {to appear in Kyoto J. Math.}.
\bibitem[K]{K}{K. Kodaira}, {Complex Manifolds and Deformation of Complex Structures}, {Springer, Classics In Mathematics}{ (2005)}
\bibitem[KS]{KS}{K. Kodaira and D. C. Spencer}, {A theorem of completeness of characteristic systems of complete continuous systems}, {Amer. J. Math. }{\bf 81}{ (1959)}, {477--500. }{MR0112156}. 
\bibitem[KU]{KU}{T. Koike and T. Uehara}, {A gluing construction of K3 surface}, {arXiv 1903.01444}. 
\bibitem[L]{L}{F. Lequen}, {Presque toute surface K3 contient une infinit\'{e} d'hypersurfaces Levi-plates lin\'{e}aires}, {arXiv:2109.08907}. 
\bibitem[MMY]{MMY}{S. Marmi, P. Mousaa and J.-C. Yoccoz}, {The Brjuno functions and their regularity properties}, {Commun. Math. Phys} {\bf 186} {(1997),265-293}. 
\bibitem[Sa]{Sa}{V. I. Saval'ev}, {Zero-type imbedding of a sphere into complex surfaces}, {Vestnik Moskov. Univ. Serv \bf{1}}, {Mat. Mekh.}, {(1982), no.4, 28--32}. 
\bibitem[Si]{S}{C. L. Siegel}, {Iteration of analytic functions}, { Ann. Math. {\bf 43} (1942), 607--612}. 
\bibitem[T]{T}{H. Tsuji}, {Complex structure on $S^3 \times S^3$}, {Tohoku Math Journ.}, {\bf 36}{ (1984), 351--367}. 
\bibitem[U]{U} {T. Ueda},  On the neighborhood of a compact Riemann surface with topologically trivial normal bundle, Math. Kyoto Univ., {\bf 22} (1983), 583--607. 
\bibitem[Y]{Y}{J.-C. Yoccoz}, {Th\'{e}or\`{e}me de Siegel, nombres de Brjuno et polyn\^{o}mes quadratiques , Petits Diviseurs en Dimension 1}, {Asterisque}, {\bf 231}{ (1995), 1--88}. 
\end{thebibliography}
\end{document}